\documentclass{article}

\parskip=.2cm
\usepackage{hyperref}
\usepackage[all]{xy}
\usepackage{mathrsfs}
\usepackage{amssymb}
\usepackage{amsmath}
\usepackage{latexsym}
\usepackage{amsthm}
\usepackage{graphics}
\usepackage[english]{babel}
\usepackage{graphicx}
\usepackage{etoolbox}
\usepackage{mathtools}
\usepackage{xcolor}

\usepackage[italian]{varioref}
\usepackage[utf8]{inputenc}
\usepackage[T1]{fontenc}
\usepackage{lmodern}
\usepackage{microtype}
\usepackage{pgf,tikz,tikz-cd}
\usetikzlibrary{arrows}

\usepackage[nowrite,swapnames]{frontespizio}

\newcommand{\bG}{\mathbb{G}}

\newcommand{\cE}{\mathcal{E}}
\newcommand{\cL}{\mathcal{L}}
\newcommand{\bN}{\mathbb{N}}
\newcommand{\bQ}{\mathbb{Q}}
\newcommand{\bZ}{\mathbb{Z}}

\newcommand{\bC}{\mathbb{C}}

\newcommand{\cA}{\mathcal{A}}
\newcommand{\cO}{\mathcal{O}}

\newcommand{\bP}{\mathbb{P}}

\newcommand{\isom}{\cong}

\newtheorem{defin}{Definition}[section]
\newtheorem{lemma}[defin]{Lemma}

\newtheorem{prop}[defin]{Proposition}
\newtheorem{rem}[defin]{Remark}
\newtheorem{thm}[defin]{Theorem}
\newtheorem*{thm1*}{Theorem $\ref{isoProduct}$}
\newtheorem*{thm2*}{Theorem $\ref{nonIsoProduct}$}
\newtheorem{conj}[defin]{Conjecture}
\newtheorem*{conj*}{Conjecture $\ref{conj}$}
\newtheorem{exe}[defin]{Example}

\linespread{1.1}

\usepackage{blindtext}

\title{Monodromy of double elliptic logarithms}
\author{Francesco Tropeano}
\date{}
\newcommand{\Addresses}{{
  \bigskip
  \footnotesize

  \noindent Francesco Tropeano\\
  Dipartimento di Matematica e Informatica,\\
  Università della Calabria, Via Pietro Bucci, Rende, Italy\\
  \noindent E-mail address: \texttt{francesco.tropeano@unical.it}\par\nopagebreak
}}

\begin{document}

\maketitle

\section{Introduction}

The following paper is devoted to the study of the monodromy of \emph{double elliptic logarithms}, i.e. a generalized notion of logarithm defined on fibered products of elliptic schemes. 

Let us consider an abelian scheme $\cA \rightarrow B$ and a section $\sigma:B \rightarrow \cA$. Period functions, abelian logarithms of $\sigma$ and the Betti map can be always globally defined on the universal cover of $B$, but they cannot be in general well-defined on the whole of $B$. We are interested in studying the minimal unramified cover on which abelian logarithm and periods become well-defined in the case of products of elliptic schemes. 

This analysis starts from a paper of Corvaja and Zannier (see \cite{CZ2}), where they study the monodromy problem in the case of a non-isotrivial elliptic scheme $\cE \rightarrow B$. In that case, we can consider the minimal unramified cover $B^* \rightarrow B$ on which periods becomes well-defined and the minimal unramified cover $B_\sigma \rightarrow B^*$ on which an elliptic logarithm of $\sigma$ becomes well-defined. They proved the following:
\begin{thm}\label{mainThm}
	Given a non-torsion (rational) section $\sigma:B\rightarrow \cE$, the cover $B_\sigma \rightarrow B^*$ has infinite degree and its Galois group is isomorphic to $\bZ^2$.
\end{thm}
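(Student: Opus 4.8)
The plan is to turn the statement into the computation of the image of an explicit monodromy homomorphism. Fix a local coordinate on the universal cover of $B$ and choose there a basis of periods $\omega_1,\omega_2$ of the fibers together with an elliptic logarithm $z$ of $\sigma$, so that $z=\beta_1\omega_1+\beta_2\omega_2$ with $(\beta_1,\beta_2)$ the real Betti coordinates. Over $B^*$ the periods $\omega_1,\omega_2$ are single valued by definition, while $z$ is only well defined modulo the lattice $\bZ\omega_1+\bZ\omega_2$; going around a loop $\gamma\in\pi_1(B^*)$ produces $z\mapsto z+m\omega_1+n\omega_2$, and since $\omega_1,\omega_2$ are now monodromy invariant the assignment $\gamma\mapsto(m,n)$ is a group homomorphism $\rho\colon\pi_1(B^*)\to\bZ^2$, namely the map $\beta_*$ induced on $\pi_1$ by the Betti map $\beta\colon B^*\to\bR^2/\bZ^2$. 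The cover $B_\sigma$ is exactly the one trivializing $z$, i.e. the one attached to $\ker\rho$, so it is Galois over $B^*$ with group $\operatorname{Im}\rho\subseteq\bZ^2$. Thus the whole theorem reduces to the single claim that $\operatorname{Im}\rho$ has rank $2$: rank $\geq 1$ gives the infinite degree, and a rank‑$2$ subgroup of $\bZ^2$ is automatically of finite index and hence isomorphic to $\bZ^2$.

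The key structural input I would exploit is that $\pi_1(B^*)$ is normal in $\pi_1(B)$ with quotient the period monodromy group $\Gamma\subseteq\mathrm{SL}_2(\bZ)$ acting on $\bZ^2=\bZ\omega_1\oplus\bZ\omega_2$. Realizing $\omega_1,\omega_2,z$ as a flat basis of the rank‑$3$ local system $0\to H\to V\to\bZ\to 0$ attached to $\sigma$, the monodromy of $g\in\pi_1(B)$ on $V$ is block‑triangular $\left(\begin{smallmatrix}A_g & \ast\\ 0 & 1\end{smallmatrix}\right)$ with $A_g\in\mathrm{SL}_2(\bZ)$; a direct conjugation computation then gives $v_{g\gamma g^{-1}}=A_g\,v_\gamma$ for $\gamma\in\pi_1(B^*)$, where $v_\gamma=\rho(\gamma)$. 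Combined with the normality of $\pi_1(B^*)$ this shows that $\operatorname{Im}\rho$ is a $\Gamma$‑stable subgroup of $\bZ^2$. This equivariance is the lever that upgrades "nonzero'' to "rank two''.

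With this in hand I would finish in two steps. To exclude rank $0$ I use the non-torsion hypothesis: if $\rho\equiv 0$ then $z$ is a globally single-valued holomorphic (hence algebraic) elliptic logarithm of $\sigma$ on $B^*$, which forces $\sigma$ to be torsion by Manin's theorem of the kernel, a contradiction. To exclude rank $1$ I use non-isotriviality: then $\Gamma$ acts $\bQ$-irreducibly on $\bQ^2$ (it admits no invariant line, since an invariant rational line would split the weight‑one variation of Hodge structure and make the Kodaira–Spencer map vanish, i.e. force $j$ to be constant). A rank‑$1$ image $\operatorname{Im}\rho$ would span a $\Gamma$-stable line in $\bQ^2$, contradicting irreducibility. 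Hence $\operatorname{Im}\rho$ has rank $2$ and is isomorphic to $\bZ^2$, proving the theorem.

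The \textbf{main obstacle} I anticipate is precisely the exclusion of rank $1$, i.e. ensuring that the period monodromy $\Gamma$ is large enough (irreducible) and correctly matching this with the equivariance; degenerate configurations, such as purely unipotent monodromy fixing a common line around a single cusp, must be handled with extra care and may require the non-torsion input a second time. As an independent nondegeneracy check one can compute the Betti form explicitly: writing $\tau=\omega_2/\omega_1$ and $\zeta=z/\omega_1$ and differentiating with respect to a local coordinate $t=x+iy$, one finds
\[
 d\beta_1\wedge d\beta_2=\frac{\bigl\lvert (\operatorname{Im}\tau)\,\zeta'-(\operatorname{Im}\zeta)\,\tau'\bigr\rvert^{2}}{(\operatorname{Im}\tau)^{3}}\,dx\wedge dy ,
\]
which is non-negative and vanishes identically only when $\zeta$ and $\tau$ are linearly dependent in a way reflecting exactly the torsion/isotriviality degeneracies; this confirms the local submersivity of $\beta$ but must be complemented by the cohomological (period) argument above to control the integral rank of $\operatorname{Im}\rho$.
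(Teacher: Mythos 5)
First, a point of order: the paper itself does not prove Theorem \ref{mainThm} --- it is quoted from Corvaja--Zannier \cite{CZ2} (see also \cite{Tro}) and used as a black box throughout, so your proposal has to be measured against the known proof and against the paper's own remarks about what such a proof must overcome. Your general framework is correct and matches the paper's formalism: the Galois group of $B_\sigma\rightarrow B^*$ is indeed $\operatorname{Im}\rho$ for the homomorphism $\rho:\pi_1(B^*)\rightarrow\bZ^2$, the conjugation identity $v_{g\gamma g^{-1}}=A_g v_\gamma$ is exactly right, and your use of it together with the $\bQ$-irreducibility of the period monodromy $\Gamma$ (of finite index in $\mathrm{SL}_2(\bZ)$ by non-isotriviality) to exclude rank $1$ is precisely the argument the paper itself runs in Lemma \ref{NoCyclicGroup} and in the lemma on $H_1,H_2$. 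So the reduction to ``$\operatorname{Im}\rho$ has rank $2$'' and the exclusion of rank $1$ are sound.

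The genuine gap is the exclusion of rank $0$, which you dispatch in one sentence and which is in fact the entire content of the theorem. Two problems. (a) The parenthetical ``hence algebraic'' is false: $B^*$ is an infinite-degree, non-algebraic cover of $B$ (it is nontrivial only because $B\rightarrow S$ is ramified), so single-valuedness of $\log_\sigma$ on $B^*$ produces no element of any function field, and neither Manin's theorem of the kernel nor a Mordell--Weil divisibility argument (which the paper does use in its isotrivial remark, but there the logarithm is single-valued on $B$ itself) can be applied. (b) More fundamentally, no argument routed through Manin's theorem, differential Galois theory, or Andr\'e's theorem of the fixed part can exclude rank $0$, because all of those control only the Zariski closure of $\theta_\sigma(\pi_1(B))$, and Remark \ref{remAndré} of the paper exhibits (following \cite{CZ1}) a subgroup $H\subset\mathrm{SL}_3(\bZ)$ whose Zariski closure is the full semidirect product $\mathrm{SL}_2\ltimes\bG_a^2$ while its intersection with the unipotent radical --- the analogue of $\operatorname{Im}\rho$ --- is trivial. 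In other words, ``non-torsion'' plus every Zariski-closure-level consequence of it is group-theoretically compatible with $\operatorname{Im}\rho=0$; note also that your $\Gamma$-equivariance is of no help here, since the trivial subgroup is $\Gamma$-stable. Ruling out this configuration for the actual geometric monodromy is the hard analytic input of \cite{CZ2} (carried out there via modular forms, and effectively in \cite{Tro}), and your proposal contains no substitute for it.
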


In the context of abelian schemes of arbitrary relative dimension, a theorem due to Y. André \cite{A} provides, under suitable assumptions, the best possible information about the Zariski closure of the image of the monodromy representation of the fundamental group, associated to a section; here, we want to determine the relative monodromy group, which is more than the information provided passing through the Zariski closure of the monodromy group (see Remark $\ref{remAndré}$ in the present paper). This paper aims at extending Theorem $\ref{mainThm}$ to all fibered products of two non-isotrivial elliptic schemes, which are abelian schemes of relative dimension 2. At first, we consider an abelian scheme $\cA \rightarrow B$ and a section $\sigma:B \rightarrow \cA$. In analogy with the case of elliptic schemes, we can consider the minimal unramified cover $B^* \rightarrow B$ on which periods become well-defined and the minimal unramified cover $B_\sigma \rightarrow B^*$ on which an abelian logarithm of $\sigma$ becomes well-defined. We begin by stating the following conjecture (which is beyond our aims since it concerns abelian schemes of arbitrary relative dimension) and proving that it is invariant under isogeny:

\begin{conj*}
	Let $\pi: \cA \rightarrow B$ be an abelian scheme of relative dimension $g$ which has no fixed part. If the image of $\sigma: B \rightarrow \cA$ is not contained in any proper group-subscheme, then the cover $B_\sigma \rightarrow B^*$ has infinite degree and its Galois group is isomorphic to $\bZ^{2g}$.
\end{conj*}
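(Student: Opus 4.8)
The plan is to translate the statement into a question about the image of a single monodromy homomorphism, and then to show that this image has full rank $2g$. On the cover $B^*$ the period local system $H := R_1\pi_*\bZ$ becomes the constant system $\bZ^{2g}$, so an abelian logarithm $\ell_\sigma$ of $\sigma$ is a multivalued map on $B^*$ whose only ambiguity, upon traversing a loop $\gamma \in \pi_1(B^*)$, is the addition of a period. This defines a homomorphism $\rho : \pi_1(B^*) \to H \cong \bZ^{2g}$, and by construction $B_\sigma \to B^*$ is the cover attached to $\ker\rho$, so that $\mathrm{Gal}(B_\sigma/B^*) = \mathrm{im}(\rho)$. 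Since every subgroup of $\bZ^{2g}$ is free, the whole statement reduces to proving that $\mathrm{im}(\rho)$ has rank $2g$; the infinite degree is then automatic.

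The first genuine step is an equivariance property. The cover $B^* \to B$ is Galois with group $\Gamma := \mathrm{im}\bigl(\pi_1(B) \to GL(H)\bigr)$, the geometric monodromy of the periods, and $\rho$ is the restriction to $\pi_1(B^*)$ of the monodromy cocycle of the one-motive extension $0 \to H \to E \to \bZ \to 0$ attached to $(\cA,\sigma)$; the cocycle is affine because $\pi_1(B)$ acts nontrivially on $H$. A direct cocycle computation gives $\rho(g\gamma g^{-1}) = g\cdot\rho(\gamma)$ for $g \in \pi_1(B)$ and $\gamma \in \pi_1(B^*)$, so $\mathrm{im}(\rho)$ is stable under $\Gamma$ and $W := \mathrm{im}(\rho)\otimes\bQ$ is a $\Gamma$-submodule of $H_\bQ$. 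By Deligne semisimplicity $W$ underlies a sub-variation of Hodge structure, hence — up to isogeny, which is harmless by the isogeny-invariance established in this paper — an abelian subscheme $\cA_W \subseteq \cA$ defined over $B$.

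It then remains to prove $W = H_\bQ$. Suppose not, and pass to the quotient $\cA'' = \cA/\cA_W \ne 0$, whose induced section $\bar\sigma$ has relative monodromy $\bar\rho$ equal to the composite of $\rho$ with $H \to H/W$; thus $\mathrm{im}(\bar\rho)\otimes\bQ = 0$, and, being a subgroup of a lattice, $\mathrm{im}(\bar\rho)$ is in fact trivial, i.e. the abelian logarithm of $\bar\sigma$ is single-valued on $B^*$. The goal is to deduce that $\bar\sigma$ is torsion: granting this, some multiple $N\sigma$ lies in $\cA_W$, so $\sigma$ is contained in the proper group-subscheme $[N]^{-1}\cA_W$, contradicting the hypothesis that its image lies in no proper group-subscheme. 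One also checks, via semisimplicity, that $\cA''$ inherits the no-fixed-part hypothesis, so that the torsion criterion would be applied legitimately.

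The deduction in this last step is the hard point, and it is where the general conjecture genuinely resists. Single-valuedness of the logarithm on $B^*$, i.e. $\bar\rho = 0$, is strictly weaker than Gauss–Manin flatness of the logarithm (the vanishing of the Manin map): the two differ exactly by the image of $H^1(\Gamma, (H/W)_\bQ)$ under inflation, so the classical theorem of the kernel cannot be invoked directly. One must instead show that, under the standing hypotheses, the extension class cannot sit in a proper $\Gamma$-submodule, and the natural tool is André's theorem \cite{A} controlling the Zariski closure of the monodromy: combined with the $\Gamma$-stability of $\mathrm{im}(\rho)$ it would force $W = H_\bQ$. The difficulty — already flagged in Remark $\ref{remAndré}$ — is that one wants the relative monodromy group on the nose and not merely its Zariski closure, and that in arbitrary relative dimension the required control of $H^1(\Gamma, H_\bQ)$ (trivial in the irreducible rank-$2$ situation of Theorem $\ref{mainThm}$, but not in general) rests on Hodge-theoretic and functional-transcendence input lying beyond the explicit monodromy computations available for products of two elliptic schemes. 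This is precisely why the statement is offered as a conjecture and the subsequent results are confined to relative dimension $2$.
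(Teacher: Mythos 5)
The statement you set out to prove is precisely the paper's Conjecture $\ref{conj}$, and the paper itself offers no proof of it in this generality: it only establishes (i) invariance of the conjecture under isogeny (Theorem $\ref{isogenyInvariance}$) and (ii) its validity for fibered products of two elliptic schemes (Theorems $\ref{isoProduct}$ and $\ref{nonIsoProduct}$). Your proposal is likewise not a proof, as you yourself concede. Everything up to the last step is a sound reformulation: identifying $\mathrm{Gal}(B_\sigma/B^*)$ with the image of the relative monodromy homomorphism $\rho$, the equivariance $\rho(g\gamma g^{-1})=\rho_\cA(g)\cdot\rho(\gamma)$ (a computation which does appear in the paper, inside Lemma $\ref{NoCyclicGroup}$ and in Claim 2 of the proof of Theorem $\ref{squareProduct}$), and the passage via semisimplicity to an abelian subscheme $\cA_W$ and its quotient $\cA''$, legitimized up to isogeny by Theorem $\ref{isogenyInvariance}$. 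But the final step --- ``the induced section $\bar\sigma$ has single-valued logarithm on the period cover, hence is torsion'' --- is exactly the contrapositive of (a weak form of) the statement being proved; your argument reduces Conjecture $\ref{conj}$ to a special case of itself, which remains open. Flagging this as ``the hard point'' identifies the obstruction correctly but does not discharge it, so the proposal establishes nothing beyond a reduction.

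There is moreover a concrete technical failure inside the reduction, beyond the one you acknowledge. The vanishing you obtain for $\bar\rho$ is vanishing on $\pi_1(B^*)$, where $B^*$ is the period cover of $\cA$; but any torsion criterion for $\bar\sigma$ --- in particular Theorem $\ref{mainTheorem}$, the only case actually known, when $\cA''$ happens to have relative dimension $1$ --- concerns the relative monodromy over $\pi_1(B^*_{\cA''})$, the period cover of the \emph{quotient}, and $\pi_1(B^*)$ is in general a normal subgroup of \emph{infinite index} in $\pi_1(B^*_{\cA''})$. A homomorphism to $\bZ^{2}$ (or $\bZ^{2\dim \cA''}$) can perfectly well vanish on the smaller group while having full rank on the larger one, so even where a torsion criterion exists you cannot invoke it. This mismatch between $\ker\rho_\cA$ and the kernels of the factors is exactly the difficulty the paper confronts in the non-isogenous case --- see the discussion preceding Proposition $\ref{nonTrivial}$ --- and which it resolves there only by the explicit commutator construction $g_1g_2g_1^{-1}g_2^{-1}$ combined with the normal-subgroup dichotomy of Lemma $\ref{NoCyclicGroup}$; your general argument has no substitute for that step.
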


Observe that, as in the case of elliptic schemes, the previous conjecture is stronger than André's theorem and implies in particular the algebraic independence of the logarithm of any non-torsion section and the periods.

Then, we shall consider the case where $\cA$ is a fibered product of the form $\cE_1\times_B\cE_2 \rightarrow B$, where an elliptic scheme is always assumed to be non-isotrivial.
\begin{rem}	
	If we consider a trivial elliptic scheme $E \times B \rightarrow B$, where $E$ is an elliptic curve defined over $\bC$, then the cover $B_\sigma \rightarrow B^*$ can be trivial for some non-torsion section $\sigma$. At first, let us observe that $B^*=B$, since periods can be defined on the whole of $B$. Moreover, let us consider a non-torsion point $P \in E$ and let us define the non-torsion section
	$$\sigma: b \mapsto (P,b) \qquad \textrm{for each } b \in B.$$
	In this case, the logarithm $\log_\sigma$ is a constant function. Thus, it is well-defined on the whole of $B$ and we have $B_\sigma=B^*=B$.
	
	If $\cE \rightarrow B$ is an isotrivial but non-trivial elliptic scheme then periods can be defined on the whole of $B$, so that we have again $B^*=B$. In this case the Mordell-Weil theorem for function fields predicts that the group of $\bC(B)$-rational points of the generic fiber of $\cE \rightarrow B$ is finitely generated. Thus, we can prove that a logarithm of a non-trivial section cannot be defined on $B^*=B$ by arguing in the following way: if a non-zero rational section admits an elliptic logarithm which is well defined on the whole of $B(\bC)$, then we may divide it, and hence the section, by any prescribed positive integer and again we have maps well defined on $B(\bC)$. Thus the section would be infinitely divisible on $B(\bC)$ (since the submultiples of the sections would be algebraic and well defined on $B(\bC)$, hence rational on $B(\bC)$). But this violates the Mordell-Weil theorem for the generic fiber of $\cE \rightarrow B$ over the function field of $B(\bC)$.
	
	This remark shows that the case of an isotrivial elliptic scheme is well understood. This is why we only focus on non-isotrivial elliptic schemes in what follows.
\end{rem}
Let us go back to considering fibered products of two elliptic schemes. We will distinguish two cases:
\begin{enumerate}
	\item[$\bullet$] the elliptic schemes $\cE_1 \rightarrow B$ and $\cE_2 \rightarrow B$ are isogenous;
	
	\item[$\bullet$] the elliptic schemes $\cE_1 \rightarrow B$ and $\cE_2 \rightarrow B$ are not isogenous.
\end{enumerate}
Distinguishing the two cases, a careful analysis of the relationship between periods and logarithms of sections of the two factors leads us to the two following results:

\begin{thm1*}
	Let $\sigma_1: B \rightarrow \cE_1, \sigma_2: B \rightarrow \cE_2$ be rational sections of two elliptic schemes such that at least one of them is non-torsion. Suppose that there exists an isogeny $\phi:\cE_1 \rightarrow \cE_2$. Let us consider the abelian scheme $\pi: \cA:=\cE_1\times_B\cE_2 \rightarrow B$ endowed with the (non-torsion) section $\sigma=(\sigma_1,\sigma_2)$. We have the following situation:
	\begin{enumerate}
		\item if $\phi\circ\sigma_1, \sigma_2$ are linearly dependent over $\bZ$, the cover $B_\sigma\rightarrow B^*$ has infinite degree and its Galois group is isomorphic to $\bZ^2$;
		
		\item if $\phi\circ\sigma_1, \sigma_2$ are linearly independent over $\bZ$, the cover $B_\sigma\rightarrow B^*$ has infinite degree and its Galois group is isomorphic to $\bZ^4$.
	\end{enumerate}
\end{thm1*}

\begin{thm2*}
	Let $\sigma_i: B \rightarrow \cE_i, i=1,2$ be rational sections of the given elliptic schemes and suppose they are not both torsion sections. Let us consider the abelian scheme $\pi: \cA:=\cE_1\times_B\cE_2 \rightarrow B$ endowed with the (non-torsion) section $\sigma=(\sigma_1,\sigma_2)$. We have the following situation:
	\begin{enumerate}
		\item if one between $\sigma_1, \sigma_2$ is a torsion section, the cover $B_\sigma\rightarrow B^*$ has infinite degree and its Galois group is isomorphic to $\{0\}$ or to $\bZ^2$;
		
		\item if neither $\sigma_1$ nor $\sigma_2$ is a torsion section, the cover $B_\sigma\rightarrow B^*$ has infinite degree and its Galois group is isomorphic to $\bZ^4$.
	\end{enumerate}
\end{thm2*}

These results determine the relative monodromy of abelian logarithms with respect to periods in the cases of fibered products of elliptic schemes.

\textbf{Acknowledgements.} This work was part of the author's PhD thesis. The author is grateful to Umberto Zannier and Pietro Corvaja for helpful discussions, for their kind attention, their useful advice and references. The author thanks Paolo Antonio Oliverio for useful discussions and references.

\section{Abelian schemes and logarithms of sections}

Let $\pi:\cA \rightarrow B$ be a complex abelian scheme of relative dimension $g$; here $B$ is a quasi-projective smooth curve, $\cA$ is a quasi-projective variety and $\pi:\cA \rightarrow B$ is a proper surjective morphism all of whose fibers $\cA_b:=\pi^{-1}(b)$ are abelian varieties of constant dimension $g$. We always suppose that the abelian scheme $\pi: \cA \rightarrow B$ has no fixed part and that there exists a section $\sigma_0:B \rightarrow \cA$ which marks the origin in each fiber.

Over every point $b \in B$, we have an abelian exponential map $\textrm{Lie}(\cA_b) \rightarrow \cA_b$, whose kernel is the period lattice. Since $\textrm{Lie}(\cA_b)$ is a complex vector space of dimension $g$, the period lattice can be seen as a lattice in $\bC^g$ for each $b \in B$. The family of Lie algebras $\textrm{Lie}(\cA)\rightarrow B$ defines a vector bundle over $B$ and we have the exponential map
$$\exp: \textrm{Lie}(\cA) \rightarrow \cA.$$

Any fiber $\cA_b$ is analytically isomorphic to a complex torus $\bC^g/\Lambda_b$, where $\Lambda_b$ is a lattice of (maximal) rank $2g$. On suitable open subsets $U \subset B$ in the complex topology, we can find holomorphic functions $\omega_{U,1}, \ldots, \omega_{U,2g}: U \rightarrow \bC^g$ such that $\omega_{U,1}(b), \ldots, \omega_{U,2g}(b)$ is a basis of $\Lambda_b$ for each $b \in U$. Moreover we may assume that $U$ is simply connected and that $B$ is covered by such sets.

Observe that by restricting the map $\exp$ to $U \times \bC^g$, we obtain the covering map
$$U\times \bC^g\rightarrow \cA_{|U},$$
where we are denoting $\pi^{-1}(U)$ by $\cA_{|U}$.

\begin{defin}
	Let $\sigma:B \rightarrow \cA$ be a section of the abelian scheme and let $U\subset B$ be an open set as above. A \emph{logarithm of $\sigma$ in $U$} is a lifting of $\sigma_{|U}$ to $U\times \bC^g$.
\end{defin}

In other words we have the following commutative diagram:
$$\begin{tikzcd}
	& U\times \bC^g \arrow{d}\\
	B \supset U \arrow[swap]{r}{\sigma_{|U}} \arrow[dashed]{ur}{\xi} & \pi^{-1}(U) \subset \cA.
\end{tikzcd}$$
Observe that $\xi$ is of the form $\xi=(\textrm{id},\tilde{\sigma})$; the holomorphic map $\tilde{\sigma}:U \rightarrow \bC^g$ will be called \emph{a logarithm of $\sigma$ in $U$}. By definition, saying that $\xi(b)$ is a logarithm of $\sigma(b)$ means that $\exp_b\circ \, \tilde{\sigma}(b)=\sigma(b)$ on $U$. In what follows, we will denote a logarithm of a section $\sigma$ by $\log_\sigma$ (instead of $\widetilde{\sigma})$.

\subsection{Monodromy representations}
	Given an abelian scheme $\cA \rightarrow B$, let us consider the fundamental group $G:=\pi_1(B,b)$ where $b \in B$ is a fixed base point. Given $g \in G$, we can consider the analytic continuation of the periods along any loop in $B$ belonging to the homotopy class $g$: this procedure induces a change of basis of the lattice $\Lambda_b$. In other words, the monodromy action of $G$ on periods induces a homomorphism to $\textrm{GL}_{2g}(\bZ)$; since the action preserves the orientation of the basis, the image of the said homomorphism is contained into $\textrm{SL}_{2g}(\bZ)$, so that we obtain a representation $\rho:G \rightarrow \textrm{SL}_{2g}(\bZ)$ which describes the monodromy of periods. If $\omega=u_1\omega_1 + \ldots +u_{2g}\omega_{2g}$ is a period, the monodromy action on the $\bZ$-module generated by periods is given by
	$$g\cdot \left(\begin{matrix}
		u_1\\ \vdots \\ u_{2g}
	\end{matrix}\right) = \rho(g)\left(\begin{matrix}
		u_1\\ \vdots \\ u_{2g}
	\end{matrix}\right).$$
	
	Moreover, given a section $\sigma:B \rightarrow \cA$, observe that two branches of a logarithm over $b \in B$ have to differ by an element of $\Lambda_b$: thus, fixed $g \in G$ we have that $\log_\sigma$ transforms in the following way:
	$$\log_\sigma \mapsto \log_\sigma +(u_1, \ldots, u_{2g})\cdot \left(\begin{matrix}
		\omega_1\\
		\vdots\\
		\omega_{2g}
	\end{matrix}\right),$$
	where $u_1, \ldots, u_{2g} \in \bZ$. Observe that the monodromy group of the logarithm as a function defined locally on $B^*$ is a subgroup of $\bZ^{2g}$.
	
	\begin{rem}\label{torsionSection}
		Let us consider a torsion section $\sigma:B \rightarrow \cA$. The Betti map of such a section is constant and a logarithm is a rational constant combination of periods. In other words we have
		$$\log_\sigma = q_1 \omega_1 + \cdots + q_{2g} \omega_{2g},$$
		where $q_1, \ldots, q_{2g} \in \bQ$. Therefore, a loop which leaves unchanged periods via analytic continuation, leaves also unchanged the logarithm of such a section. In other words, the cover $B_\sigma \rightarrow B^*$ is trivial in this case. This is why we only consider non-torsion sections in what follows.
	\end{rem}

In the particular case of an elliptic scheme $\pi:\cE \rightarrow B$, if we look at the simultaneous monodromy action of $G:=\pi_1(B)$ on periods and logarithm of a section $\sigma:B \rightarrow \cE$, we provide a representation
$$\theta_\sigma: G \rightarrow \textrm{SL}_3(\bZ),$$
where every matrix $\theta_\sigma(g)$ is of the form
\begin{equation}\label{simultaneousRepresentation}
	\theta_\sigma(g)=\left( \begin{matrix} T_g & w_g \\ 0 & 1 \end{matrix} \right),
\end{equation}
where $T_g=\rho(g) \in \textrm{SL}_2(\bZ)$ is a matrix acting on periods (so it does not depend on $\sigma$), and $w_g=(u_g,v_g)^t \in \bZ^2$ is a vector which corresponds to the following monodromy action on logarithm:
$$\log_\sigma \xrightarrow{g} \log_\sigma + u_g\omega_1 + v_g\omega_2.$$

Theorem $\ref{mainThm}$, which determines the relative monodromy group of the logarithm with respect to periods, can be restated in terms of representations as follows:

\begin{thm}\label{mainTheorem}
	Given a non-torsion (rational) section $\sigma: B \rightarrow \cE$, the kernel of the homomorphism $\theta_\sigma(G) \rightarrow \textrm{SL}_2(\bZ)$ is isomorphic to $\bZ^2$, which is equivalent to saying
	$$\theta_\sigma(\ker{\rho})\isom \bZ^2.$$
\end{thm}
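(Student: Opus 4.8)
The plan is to reduce the statement to Theorem $\ref{mainThm}$ by identifying the kernel in question with the Galois group of the cover $B_\sigma \to B^*$. First I would analyze the homomorphism $\theta_\sigma(G) \to \textrm{SL}_2(\bZ)$: by the block form $\eqref{simultaneousRepresentation}$, each element $\theta_\sigma(g)$ is sent to its upper-left block $T_g = \rho(g)$, and this assignment is multiplicative since
\[
\begin{pmatrix} T_g & w_g \\ 0 & 1 \end{pmatrix}\begin{pmatrix} T_h & w_h \\ 0 & 1 \end{pmatrix} = \begin{pmatrix} T_g T_h & T_g w_h + w_g \\ 0 & 1 \end{pmatrix}.
\]
Hence $\theta_\sigma(g)$ lies in the kernel exactly when $\rho(g) = I$, that is when $g \in \ker\rho$. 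This gives at once $\ker\big(\theta_\sigma(G) \to \textrm{SL}_2(\bZ)\big) = \theta_\sigma(\ker\rho)$, which is precisely the asserted equivalence of the two formulations.

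Next I would show that $\theta_\sigma(\ker\rho)$ is a subgroup of $\bZ^2$. The displayed relation specializes, for $g,h \in \ker\rho$ (so that $T_g = T_h = I$), to $w_{gh} = w_g + w_h$; thus the cocycle $g \mapsto w_g$ restricts to a genuine group homomorphism $\ker\rho \to \bZ^2$. Consequently the assignment $\theta_\sigma(g) \mapsto w_g$ is a well-defined isomorphism from $\theta_\sigma(\ker\rho)$ onto its image $w(\ker\rho) \le \bZ^2$, because on matrices of the form $\left(\begin{smallmatrix} I & w \\ 0 & 1 \end{smallmatrix}\right)$ the matrix product corresponds to addition of the vectors $w$. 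At this stage the kernel is realized as a subgroup of $\bZ^2$, and it remains only to prove that this subgroup has full rank $2$.

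Finally, I would interpret $w(\ker\rho)$ geometrically in order to apply the known result. Since $B^* \to B$ is the unramified cover associated to $\ker\rho \le \pi_1(B)$, one identifies $\pi_1(B^*)$ with $\ker\rho$; over $B^*$ the periods are single-valued, while the multivalued logarithm $\log_\sigma$ has monodromy $g \mapsto w_g$ along loops $g \in \pi_1(B^*)$, as recorded in Remark $\ref{torsionSection}$ and the preceding discussion. Therefore the minimal cover $B_\sigma \to B^*$ trivializing $\log_\sigma$ has Galois group $\pi_1(B^*)/\ker\big(w|_{\ker\rho}\big) \cong w(\ker\rho)$. By Theorem $\ref{mainThm}$ this Galois group is isomorphic to $\bZ^2$, whence $\theta_\sigma(\ker\rho) \cong w(\ker\rho) \cong \bZ^2$, completing the argument. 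The substantive input is Theorem $\ref{mainThm}$ itself, which supplies the full-rank conclusion; within the present translation the only point demanding care is the precise identification of $\textrm{Gal}(B_\sigma/B^*)$ with the image $w(\ker\rho)$ — that is, verifying that the monodromy of the logarithm over $B^*$ is captured exactly by the vectors $w_g$ with $g \in \ker\rho$ and that no coarser quotient intervenes.
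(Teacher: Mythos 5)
Your proof is correct and takes essentially the same route as the paper: there, Theorem $\ref{mainTheorem}$ is not proved independently but presented as a restatement of Theorem $\ref{mainThm}$ (the Corvaja--Zannier result cited from \cite{CZ2}), which is exactly the reduction you perform. Your write-up simply makes explicit the dictionary the paper leaves implicit --- that the kernel of the block projection equals $\theta_\sigma(\ker\rho)$, that $g \mapsto w_g$ is additive on $\ker\rho$, and that $w(\ker\rho)$ is identified with the Galois group of $B_\sigma \rightarrow B^*$.
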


\begin{rem}\label{remAndré}
	Observe that the conclusion of Theorem $\ref{mainTheorem}$ is stronger than knowing the kernel of the homomorphism $\theta_\sigma(G)^\textrm{Zar} \rightarrow \textrm{SL}_2$, obtained by taking the Zariski closure of the group $\theta_\sigma(G)$, as the following example shows. Define $H$ to be the subgroup of $\textrm{SL}_3(\bZ)$ generated by the matrices
	$$A:=\left( \begin{matrix}
		1 & 2 & v_1\\
		0 & 1 & v_2\\
		0 & 0 & 1
	\end{matrix}\right) =: \left( \begin{matrix}
		A_0 & v\\
		0 & 1
	\end{matrix}\right),
	\qquad
	B:=\left( \begin{matrix}
		1 & 0 & w_1\\
		2 & 1 & w_2\\
		0 & 0 & 1
	\end{matrix}\right) =: \left( \begin{matrix}
		B_0 & w\\
		0 & 1
	\end{matrix}\right),$$
	where $A_0, B_0$ are the standard unipotent generators of $\Gamma_2$ and $v,w$ are a basis for $\bZ^2$. It can be shown that the Zariski closure of $H$ is the full semidirect product of $\textrm{SL}_2$ by $\bG_a^2$, whereas the kernel of the natural map to $\textrm{SL}_2$ is trivial; the details can be found in \cite{CZ1}.
\end{rem}

\section{Monodromy of abelian logarithms: invariance under isogeny}

Let $\pi:\cA \rightarrow B$ be a complex abelian scheme of relative dimension $g$ and let us consider a section $\sigma:B \rightarrow \cA$. We will call $B^* \rightarrow B$ the minimal (unramified) cover of $B$ on which a basis for the period lattice can be globally defined; moreover, we set $B_\sigma \rightarrow B^*$ to be the minimal cover of $B^*$ on which we can define the logarithm of $\sigma$. The tower of covers is represented in the diagram
$$B_\sigma \rightarrow B^* \rightarrow B.$$
Our aim is to prove something similar to Theorem $\ref{mainTheorem}$ for double elliptic schemes (i.e. fibered products of elliptic schemes). Let us start by formulating the following conjecture (which is beyond our aims since it concerns abelian schemes of arbitrary relative dimension) and by proving it is invariant under isogeny:

\begin{conj}\label{conj}
	Let $\pi: \cA \rightarrow B$ be an abelian scheme of relative dimension $g$ which has no fixed part. If the image of $\sigma: B \rightarrow \cA$ is not contained in any proper group-subscheme, then the cover $B_\sigma \rightarrow B^*$ has infinite degree and its Galois group is isomorphic to $\bZ^{2g}$.
\end{conj}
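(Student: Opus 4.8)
The plan is to convert the conclusion into a single rank computation for the monodromy group of the logarithm and then to attack that rank by combining the semisimplicity of the period monodromy with the non-degeneracy hypothesis on $\sigma$; the genuine difficulty, and the reason the statement is only conjectural, will be isolated in the very last step.

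First I would identify the Galois group. Over $B^*$ a basis $\omega_1,\dots,\omega_{2g}$ of the period lattice is single-valued, so $\rho$ is trivial on $\pi_1(B^*)=\ker\rho$ and, exactly as recorded after \eqref{simultaneousRepresentation} in the elliptic case, the cocycle $g\mapsto w_g$ restricts on $\ker\rho$ to a homomorphism $\lambda:\ker\rho\ri\bZ^{2g}$, $g\mapsto w_g$. The Galois group of $B_\sigma\ri B^*$ is then precisely $M:=\lambda(\ker\rho)\cont\bZ^{2g}$, a free abelian group of some rank $r\le 2g$, and the degree of the cover equals $|M|$. Since a free abelian group of rank $2g$ is isomorphic to $\bZ^{2g}$ and is infinite, the equality $r=2g$ yields simultaneously $M\isom\bZ^{2g}$ and infinite degree, so the conjecture is equivalent to the single assertion $r=2g$. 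Moreover, because $\ker\rho$ is normal in $G$ and $\rho$ is trivial on it, the cocycle relation gives $w_{ghg^{-1}}=\rho(g)w_h$ for $g\in G$, $h\in\ker\rho$; hence $M$ is a $\rho(G)$-stable submodule of $\bZ^{2g}$ and $V:=M\pd_\bZ\bQ$ is a $\bQ[\rho(G)]$-submodule of $H_1(\cA_b,\bQ)\isom\bQ^{2g}$.

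Next I would use the geometry to constrain $V$. By Deligne's theorem of the fixed part, the action of $\rho(G)$ on $H_1(\cA_b,\bQ)$ is semisimple (a polarization makes the connected monodromy group reductive), and every $\rho(G)$-stable subspace underlies a sub-variation of Hodge structure, hence corresponds up to isogeny to an abelian subscheme. Let $\cA_1\cont\cA$ be the abelian subscheme attached to $V$ and $\cA':=\cA/\cA_1$ the complementary quotient, which again has no fixed part. The image $\sigma'$ of $\sigma$ in $\cA'$ has logarithm-monodromy equal to the image of $M$ under $H_1(\cA_b,\bQ)\ri H_1(\cA'_b,\bQ)$; since $V$ maps to $0$ there and the target lattice is free, $\log_{\sigma'}$ is single-valued on $B^*$. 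Thus, assuming the crux below, $\sigma'$ would be torsion, forcing $\sigma$ into $\cA_1$ up to torsion, i.e. into the proper group-subscheme $[n]^{-1}(\cA_1)$ for suitable $n$, contradicting the hypothesis; hence $V=H_1(\cA_b,\bQ)$, $r=2g$, and the Galois group is $\bZ^{2g}$. By semisimplicity the whole problem reduces to excluding every proper $\rho(G)$-submodule $V\subsetneq H_1(\cA_b,\bQ)$, the dangerous ones being the ``diagonal'' submodules arising when $\cA$ has isogenous factors; ruling these out is exactly where the non-degeneracy of $\sigma$ enters, cf. Theorem $\ref{isoProduct}$.

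The main obstacle is the crux just invoked: proving that a non-torsion section of an abelian scheme with no fixed part cannot have a single-valued logarithm on $B^*$, equivalently $M\di 0$. One is tempted to imitate the isotrivial case and divide $\log_{\sigma'}$ by every integer to produce division points $\sigma'/n$ with single-valued logarithm on $B^*$, then invoke finite generation of the Mordell--Weil group; but $B^*\ri B$ has in general infinite degree, so $B^*$ is a transcendental cover and these single-valued division points need not be algebraic over the function field, which is precisely where Lang--Néron finiteness fails to apply. Worse, the nonvanishing cannot be read off from André's theorem on the Zariski closure of $\theta_\sigma(G)$: as Remark $\ref{remAndré}$ shows, that closure can be the full $\mathrm{SL}_2\ltimes\bG_a^2$ while the kernel toward $\mathrm{SL}_2$ — that is, the relative monodromy $M$ — is trivial. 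Overcoming this requires genuinely controlling the cocycle restricted to $\ker\rho$, for instance through a relative version of Manin's theorem of the kernel combined with the module-irreducibility above; for relative dimension $1$ this is Corvaja--Zannier's Theorem $\ref{mainTheorem}$, but in arbitrary relative dimension it is open, which is why the statement is posed as a conjecture.
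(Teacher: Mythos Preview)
The statement you are addressing is a \emph{conjecture} in the paper, not a theorem; the paper explicitly says it ``is beyond our aims since it concerns abelian schemes of arbitrary relative dimension'' and offers no proof in general. What the paper does establish is invariance of the conjecture under isogeny (Theorem~\ref{isogenyInvariance}) and the two special cases $\cA=\cE_1\times_B\cE_2$ with $\cE_1,\cE_2$ isogenous or not (Theorems~\ref{isoProduct}, \ref{nonIsoProduct}). So there is no general proof to compare your proposal against, and you are right that the statement remains open.

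Your write-up is an outline of a reduction, not a proof, and you say so: you isolate the ``crux'' that a non-torsion section of an abelian scheme with no fixed part cannot have single-valued logarithm on $B^*$, and you correctly explain why neither the naive division argument nor Andr\'e's theorem on the Zariski closure of $\theta_\sigma(G)$ settles it (this is exactly the content of Remark~\ref{remAndré}). That diagnosis is accurate.

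One genuine gap in your reduction, independent of the crux, is the sentence ``every $\rho(G)$-stable subspace underlies a sub-variation of Hodge structure, hence corresponds up to isogeny to an abelian subscheme.'' Deligne gives semisimplicity of the underlying local system and that the $\pi_1$-invariants of $\mathrm{End}(H_1)$ form a sub-Hodge structure, but it does not say that an individual $\pi_1$-equivariant idempotent is a $(0,0)$-class; equivalently, it is not automatic that $\mathrm{End}_{\bQ[\pi_1]}(H_1(\cA_b,\bQ))=\mathrm{End}^0(\cA)$. Without this, your $V=M\otimes\bQ$ need not equal $H_1$ of an abelian subscheme, and the passage to the quotient $\cA'=\cA/\cA_1$ breaks down. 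This can be repaired with more Hodge theory, but it is not as immediate as stated.

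For comparison, in the cases the paper does treat it proceeds very differently from your Hodge-theoretic reduction. It works directly with the cocycle: it first shows $\theta_\sigma(\ker\rho_\cA)$ can only have rank $2$ or $4$ (Propositions~\ref{possibleCasesIso}, \ref{possibleCases}), then in the rank-$2$ case produces a matrix $M\in\mathrm{GL}_2(\bQ)$ intertwining the two $\bZ^2$-components, and finally uses Zariski-density of the monodromy in $\mathrm{SL}_2$ (isogenous case) or $\mathrm{SL}_2\times\mathrm{SL}_2$ (non-isogenous case, via Theorem~\ref{isogenyThm}) to force $M$ to be scalar, resp.\ zero, contradicting the hypothesis on $\sigma$. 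The role of your ``crux'' is played throughout by the already-known elliptic case, Theorem~\ref{mainTheorem}, applied to the factors; no general nonvanishing principle is invoked. Your approach is more conceptual and would scale to higher $g$ if the crux were available, while the paper's is hands-on and specific to products of elliptic schemes.
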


Observe that the hypothesis for the image of $\sigma: B \rightarrow \cA$ to be not contained in any proper group-subscheme is necessary, as shown by the following example.

\begin{exe}
	Let $\pi_\cE:\cE \rightarrow B$ be an elliptic scheme with zero-section denoted by $\sigma_0$ and let $\sigma:B \rightarrow \cE$ be a non-torsion section. By Theorem $\ref{mainTheorem}$, we know that the Galois group of $B_\sigma \rightarrow B^*$ is isomorphic to $\bZ^2$. We can consider the fibered product $\pi_\cA: \cA:=\cE\times_B \cE \rightarrow B$, which gives rise to an abelian scheme of relative dimension two. If we denote by $\cE_b:=\pi_\cE^{-1}(b)$ the fiber of the elliptic scheme over a point $b$, then the fiber $\cA_b$ is given by the product $\cE_b\times \cE_b$. The morphism
	$$\widetilde{\sigma}:=(\sigma_0, \sigma): B \rightarrow \cA, \qquad b \mapsto (0_b,\sigma(b))$$
	is a section of the abelian scheme, whose image is contained in the proper group-subscheme $\sigma_0(B)\times_B\cE \rightarrow B$ of $\cA\rightarrow B$. Note that the cover $B^* \rightarrow B$ is the same for the two schemes $\cA\rightarrow B$ and $\cE \rightarrow B$. Moreover, the cover $B_{\widetilde{\sigma}} \rightarrow B^*$ is the same as the cover $B_\sigma \rightarrow B^*$, i.e. the Galois group of $B_{\widetilde{\sigma}} \rightarrow B^*$ is isomorphic to $\bZ^2$, thus in this case it is not as large as possible.
	
	More generally, similar examples can be obtained by considering a section $\widetilde{\sigma}=(\sigma_1, \sigma_2)$ where $\sigma_1, \sigma_2$ are linearly dependent sections of the elliptic scheme $\cE \rightarrow B$. The case of the abelian scheme $\cE\times_B\cE \rightarrow B$ is fully covered in Section $\ref{isoSection}$, where we prove the conjecture for product of isogenous elliptic schemes.
\end{exe}

\section*{Proof of ``Invariance under isogeny''}

Let consider two abelian schemes $\pi: \cA \rightarrow B, \pi': \cA'\rightarrow B$. Recall the following definition:

\begin{defin}
	A morphism $f: \cA \rightarrow \cA'$ of group schemes over a scheme $B$ is said to be an {\emph isogeny} if $f$ is surjective and if its kernel $\ker f$ is a flat finite group $B$-scheme.
\end{defin}

We start by showing that the above conjecture is isogeny-invariant, in other words if $\cA$ and $\cA'$ are isogenous, then the conjecture for $\cA'$ implies the conjecture for $\cA$. In order to prove this, let consider a non-torsion section $\sigma:B \rightarrow \cA$ and suppose that the theorem is true for $\cA' \rightarrow B$.

\begin{lemma}\label{isogenyKer}
	If $f:\cA \rightarrow \cA'$ is an isogeny of abelian schemes, then the map $b \mapsto \# \ker(f_{|{\cA_b}})$ is constant.
\end{lemma}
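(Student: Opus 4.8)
The plan is to exploit the finiteness and flatness of the kernel, together with the fact that we work over $\bC$ (hence in characteristic zero), in order to reduce the claim to the local constancy of the rank of a locally free sheaf over a connected base. First I would set $K := \ker f$ and denote by $p: K \to B$ its structure morphism. By the definition of isogeny, $K$ is a finite flat group $B$-scheme, so $p$ is finite and flat; consequently $p_* \cO_K$ is a coherent $\cO_B$-module which is locally free of finite rank. Since a locally free sheaf has locally constant rank and $B$ is a connected smooth curve, this rank equals a single constant integer $N$.

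Next I would identify the fibers. Formation of the kernel of a homomorphism of group schemes commutes with base change, so for every $b \in B$ the fiber $K_b = p^{-1}(b)$ is canonically isomorphic to $\ker(f_{|\cA_b})$, the kernel of the isogeny induced on the abelian variety $\cA_b$. Thus it suffices to show that the number of closed points of $K_b$ equals $N$ for each $b$, and the problem becomes one of passing from the scheme-theoretic degree $N = \operatorname{rank}(p_*\cO_K)$ to the actual cardinality of the geometric fibers.

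This passage is the key step, and it is precisely where the characteristic zero hypothesis enters. Over a field of characteristic zero every finite group scheme is étale (Cartier's theorem), so $K_b$ is a \emph{reduced} finite scheme over the residue field $\bC$; hence its number of closed points equals $\dim_{\bC} \cO(K_b) = N$. Combining this with the base-change identification of the previous paragraph yields $\#\ker(f_{|\cA_b}) = N$ for all $b \in B$, which is the desired constancy.

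I expect the only genuine obstacle to be the justification that the fiberwise cardinality coincides with the constant rank $N$: this is exactly what étaleness of $K$ provides, and without the characteristic zero assumption one could conclude only that the degree $N$ is locally constant, not that the number of points is. A secondary technical point, if one wants to phrase the argument via a global étale cover, is to upgrade fiberwise étaleness to étaleness of $p: K \to B$ as a morphism of schemes; this follows from flatness over $B$ together with fiberwise étaleness by the fibral criterion for étaleness, after which $K \to B$ is simply a finite covering map of $B$ of constant degree.
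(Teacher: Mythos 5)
Your proposal is correct and follows essentially the same route as the paper: both use that the isogeny's kernel is finite and flat over $B$, so the pushforward of its structure sheaf is locally free of constant rank on the connected base, and both invoke characteristic zero (Cartier's theorem / reducedness of finite group schemes over $\bC$) to identify that rank with the number of points in each fiber $\ker(f_{|\cA_b})$. The only cosmetic difference is that you phrase the last step via étaleness while the paper phrases it via reducedness of the fiber as an algebraic group; these are the same fact.
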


\begin{proof}
	Let us consider the $B$-scheme $\ker{f} \xrightarrow{\pi_{|\ker{f}}} B$ and recall that the fiber of $\pi_{|\ker{f}}$ over a point $b \in B$ is given by
	$$(\ker{f})_b=\ker(f_{|\cA_b})=\ker{f} \times_B \textrm{Spec } \bC(b).$$
	By definition of isogeny, the restriction
	$$\pi_{|\ker{f}} : \ker{f} \rightarrow B$$
	is a flat finite morphism. Then the map
	$$B \rightarrow \bN, \qquad b \mapsto \dim_{\mathbb{C}(b)}((\pi_*\cO_{\ker{f}})_b\otimes_{\cO_{B,b}}\mathbb{C}(b))$$
	is locally constant. Since $B$ is connected, then this function is constant, say 
	$$\dim_{\mathbb{C}(b)}((\pi_*\cO_{\ker{f}})_b\otimes_{\cO_{B,b}}\mathbb{C}(b))=q \qquad \textrm{for each } b \in B,$$
	where $q \in \bN$. Then $(\pi_*\cO_{\ker{f}})_b\otimes_{\cO_{B,b}}\mathbb{C}(b)$ is isomorphic to $\bC(b)^q$ as vector space. Since the fiber $(\ker{f})_b$ is an algebraic group (in characteristic zero), hence it is reduced, then $\ker(f_{|\cA_b})$ is a disjoint union of $q$ points.
\end{proof}

Let consider the following diagram
$$\begin{tikzcd}
	\cA \arrow{rr}{f} \arrow{rd}{\pi}& & \cA' \arrow{ld}[swap]{\pi'}\\
	&B \arrow[bend left]{lu}{\sigma} \arrow[bend right, swap, dashed]{ru}{\sigma':=f\circ \sigma}
\end{tikzcd}$$
where given $\sigma$ as above we define $\sigma':=f\circ\sigma$. Obviously, this last is a section of $\cA'\rightarrow B$ since
$$\pi'\circ \sigma' = \pi' \circ f \circ \sigma = \pi\circ \sigma =\textrm{id}_B.$$

\begin{prop}
	If $\sigma$ is a non-torsion section of $\cA\rightarrow B$, then $\sigma'$ is a non-torsion section of $\cA' \rightarrow B$.
\end{prop}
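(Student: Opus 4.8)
The plan is to argue by contraposition: rather than showing directly that $\sigma$ non-torsion forces $\sigma'$ non-torsion, I would show that if $\sigma'=f\circ\sigma$ is torsion then $\sigma$ is already torsion. Recall that a section $\tau$ of an abelian scheme over the connected base $B$ is torsion precisely when there is a positive integer $n$ with $n\tau=\sigma_0$ (the zero section), equivalently when $\tau(b)$ is a torsion point of bounded order in every fibre. So I would begin by assuming $\sigma'$ is torsion, say $n\sigma'=\sigma'_0$ for some $n\in\bN$, $n>0$, and deduce a contradiction with the non-torsionness of $\sigma$.

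The first step is to locate $n\sigma$ inside the kernel. Since $f$ is a homomorphism of group schemes over $B$, for every $b\in B$ we have
$$f_{|\cA_b}\big(n\sigma(b)\big)=n\,\sigma'(b)=0_b,$$
so the section $n\sigma$ factors through $\ker f$, i.e. $n\sigma(b)\in\ker(f_{|\cA_b})$ for all $b\in B$. The second and crucial step is to annihilate $n\sigma$ by a single integer that does not depend on $b$. Here I would invoke Lemma $\ref{isogenyKer}$: the quantity $q:=\#\ker(f_{|\cA_b})$ is constant in $b$. Each fibre $\ker(f_{|\cA_b})$ is a finite abelian group of order $q$ (it is the kernel of an isogeny of complex abelian varieties), so by Lagrange's theorem $q\cdot x=0_b$ for every $x\in\ker(f_{|\cA_b})$. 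Applying this to $x=n\sigma(b)$ gives $qn\,\sigma(b)=0_b$ for all $b$, that is $(qn)\sigma=\sigma_0$ as sections of $\cA\rightarrow B$. Thus $\sigma$ is torsion, contradicting the hypothesis, and the contrapositive is established.

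The main obstacle, and the only non-formal point, is precisely this uniformity. A priori the order of the element $n\sigma(b)$ in $\ker(f_{|\cA_b})$ could vary with $b$, and to conclude that $\sigma$ is torsion as a \emph{section} one needs a single integer annihilating $n\sigma$ over the whole of $B$. This is exactly what Lemma $\ref{isogenyKer}$ supplies: the constancy of the fibre cardinality $q$ produces the uniform annihilator $q$, after which the conclusion follows by a purely formal manipulation of sections.
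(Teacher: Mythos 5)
Your proof is correct and follows essentially the same route as the paper: the contrapositive ``$\sigma'$ torsion $\Rightarrow \sigma$ torsion'', placing $n\sigma(b)$ in $\ker(f_{|\cA_b})$, and using Lemma \ref{isogenyKer} to get the uniform annihilator $q$, so that $(qn)\sigma=0$. Your explicit appeal to Lagrange's theorem merely spells out a step the paper leaves implicit.
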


\begin{proof}
	As we have just observed, $\sigma'$ is a section of $\cA' \rightarrow B$. We prove the equivalent statement ``$\sigma'$ torsion $\Rightarrow \sigma$ torsion''. So suppose $k\sigma'=0$ for some $k$. Since $f_{|\cA_b}$ is a morphism, we have $$f(k\sigma(b))=k(f\circ\sigma(b))=k\sigma'(b)=0$$ for each $b$. This means $k\sigma(b) \in \ker(f_{|\cA_b})$ for each $b$. By Lemma $\ref{isogenyKer}$, $\ker(f_{|\cA_b})$ is a finite group of fixed order $q$ for each $b \in B$. Therefore $qk\sigma(b)=0$ for each $b$. This means $(qk)\sigma=0$; in other words $\sigma$ is torsion.\\
\end{proof}

\begin{thm}\textbf{(Invariance under isogeny)}\label{isogenyInvariance}
	Let $\cA \rightarrow B, \cA' \rightarrow B$ be two abelian schemes of relative dimension $g$ and $f: \cA \rightarrow \cA'$ an isogeny. If Conjecture $\ref{conj}$ holds for $\cA'\rightarrow B$, then it holds for $\cA \rightarrow B$.
\end{thm}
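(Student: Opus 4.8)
The plan is to show that the isogeny $f:\cA\to\cA'$ induces, at the level of the relevant covers, a controlled comparison between the monodromy data of $\sigma$ and $\sigma'=f\circ\sigma$, so that the isomorphism $\mathrm{Gal}(B_{\sigma'}\to B^*)\cong\bZ^{2g}$ (which we are assuming) forces the same for $\mathrm{Gal}(B_\sigma\to B^*)$. The starting observation is that the period cover $B^*\to B$ is the \emph{same} for $\cA$ and $\cA'$. Indeed, an isogeny $f$ induces over each $b$ a $\bC$-linear isomorphism $\mathrm{Lie}(\cA_b)\xrightarrow{df_b}\mathrm{Lie}(\cA'_b)$ (in characteristic zero $f$ is étale on tangent spaces) carrying the lattice $\Lambda_b$ into a finite-index sublattice $\Lambda'_b$ of the period lattice of $\cA'_b$. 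Since passing to a finite-index sublattice does not change on which cover a \emph{basis} can be globally, single-valuedly defined (a basis of $\Lambda$ determines a basis of the commensurable $\Lambda'$ up to the fixed rational change-of-basis matrix $df_b$, which is monodromy-invariant), the two period lattices have conjugate monodromy representations and hence the same minimal trivializing cover. So I may and will work over a common base $B^*$ for both schemes.

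**Next I would** compare logarithms. Fix the base point and a basis $\omega_1,\dots,\omega_{2g}$ of $\Lambda_b$ on $B^*$; then $df(\omega_1),\dots,df(\omega_{2g})$ generate a finite-index sublattice of $\Lambda'_b$, and $df\circ\log_\sigma$ is a logarithm of $\sigma'$ (since $\exp'\circ df=f\circ\exp$, so $\exp'(df(\log_\sigma))=f(\exp(\log_\sigma))=f(\sigma)=\sigma'$). Thus the local logarithm of $\sigma'$ is $df$ applied to the local logarithm of $\sigma$. The monodromy of $\log_\sigma$ along a loop $\gamma\in\ker\rho$ adds an element $\lambda_\gamma\in\Lambda_b$, and correspondingly the monodromy of $\log_{\sigma'}$ adds $df(\lambda_\gamma)\in\Lambda'_b$. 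Writing this in terms of the chosen bases, the relative monodromy homomorphisms fit into a commutative square: the monodromy module $M_\sigma\subset\bZ^{2g}$ of $\sigma$ (coordinates of $\lambda_\gamma$ in the $\omega_i$) maps, via the fixed integer matrix $D$ representing $df$ in these bases, injectively into the monodromy module $M_{\sigma'}\subset\bZ^{2g}$ of $\sigma'$ — and since $D$ has nonzero determinant, $D$ restricts to an injection $M_\sigma\hookrightarrow M_{\sigma'}$ with finite cokernel. Because $\mathrm{Gal}(B_\sigma\to B^*)\cong M_\sigma$ and $\mathrm{Gal}(B_{\sigma'}\to B^*)\cong M_{\sigma'}$ as abelian groups, and $M_{\sigma'}\cong\bZ^{2g}$ by hypothesis, the finite-index injection $D:M_\sigma\hookrightarrow M_{\sigma'}$ shows $M_\sigma$ is a subgroup of $\bZ^{2g}$ of rank $2g$; it remains to upgrade ``rank $2g$, hence $\cong\bZ^{2g}$'' (automatic, since a finite-index subgroup of $\bZ^{2g}$ is free of the same rank) to the conclusion.

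**I should also check** the hypothesis transfers: the Proposition above already gives that $\sigma'$ is non-torsion when $\sigma$ is, but Conjecture $\ref{conj}$ requires the stronger hypothesis that the image of $\sigma'$ lies in no proper group-subscheme. Here I would argue contrapositively: if $\sigma'=f\circ\sigma$ factored through a proper group-subscheme $\cG'\subsetneq\cA'$, then $\sigma$ would factor through $f^{-1}(\cG')$, which (as $f$ has finite kernel) is a proper group-subscheme of $\cA$; so the ``no proper subscheme'' hypothesis for $\sigma$ passes to $\sigma'$, making the conjecture for $\cA'$ applicable to $\sigma'$.

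\textbf{The main obstacle} I anticipate is not the rank comparison (which is elementary linear algebra over $\bZ$ once the commutative square is set up), but justifying rigorously that the monodromy module is the correct group-theoretic incarnation of $\mathrm{Gal}(B_\sigma\to B^*)$ \emph{and} that $df$ induces exactly the claimed map on these modules globally, not merely fiberwise — i.e. that the integer matrix $D$ is genuinely monodromy-invariant and independent of the loop, so that the square commutes on the nose. This requires care in tracking how $df$ interacts with the trivialization of the period lattice over $B^*$ and with the simultaneous monodromy representation $\theta_\sigma$; the key point to verify is that, over $B^*$ where periods are single-valued, $df$ is represented by a \emph{constant} integer matrix, after which everything reduces to the finite-index subgroup argument above.
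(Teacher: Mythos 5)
Your proposal is correct and follows essentially the same route as the paper's proof: the isogeny acts fiberwise as a constant, monodromy-invariant matrix, so the period representations of $\cA$ and $\cA'$ are conjugate (giving the same cover $B^*$), and the logarithm monodromies over $\ker\rho$ correspond under that same invertible matrix, identifying $\mathrm{Gal}(B_\sigma\to B^*)$ with $\mathrm{Gal}(B_{\sigma'}\to B^*)\cong\bZ^{2g}$. Two remarks: the finite (in fact zero) cokernel of $D\colon M_\sigma\to M_{\sigma'}$ follows not from $\det D\neq 0$ (which only gives injectivity) but from your own observation that every monodromy increment of $\log_{\sigma'}$ along $\gamma\in\ker\rho$ equals $df(\lambda_\gamma)$, so that $D(M_\sigma)=M_{\sigma'}$ exactly; and your contrapositive check that the ``no proper group-subscheme'' hypothesis passes from $\sigma$ to $\sigma'$ is a worthwhile addition, since the paper explicitly verifies only that non-torsion is preserved.
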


\begin{proof}
	Let $\sigma:B \rightarrow \cA$ be a non-torsion section and define, as above, $\sigma':=f\circ \sigma$. We have the following two towers of coverings:
	\begin{align*}
		B_\sigma \, \rightarrow B^*_1 \rightarrow B,\\
		B_{\sigma'} \rightarrow B^*_2 \rightarrow B,
	\end{align*}
	which correspond to the relative monodromy problems for $\log_\sigma, \log_{\sigma'}$, respectively. Since $\cA$ and $\cA'$ are isogenous, the periods of $\cA$ are related to those of $\cA'$ through a matrix in $\textrm{GL}_{2g}(\bQ)$ (this matrix does not depend on $b \in B$). In order to prove this, let us consider the fibers $\cA_b, \cA'_b$ and let us denote by $\omega_i, \omega'_i \in \bC^g$ (as row vectors), for $i=1, \ldots, 2g$, the corresponding periods. The isogeny $f$ induces an isogeny on the fibers, i.e. $f_b: \cA_b \rightarrow \cA'_b$. So there exists $M (=M_b) \in \textrm{GL}_{g}(\bC)$ such that
	\begin{align*}
		\omega_1 \cdot M &= a_{1,1}\omega'_1 + \ldots + a_{1,2g}\omega'_{2g},\\
		\vdots\\
		\omega_{2g} \cdot M &= a_{2g,1}\omega'_1 + \ldots + a_{2g,2g}\omega'_{2g},
	\end{align*}
	where $a_{i,j} \in \bZ$ for each $i,j$. Thus we obtain the relation:
	\begin{equation}\label{abelianPeriods}
		\left(\begin{matrix}\omega_1M\\ \vdots\\  \omega_{2g}M\end{matrix}\right) = \zeta \left(\begin{matrix}\omega'_1\\ \vdots \\ \omega'_{2g}\end{matrix}\right),
	\end{equation}
	where we denote by $\zeta$ the matrix $(a_{i,j})_{i,j=1, \ldots, 2g} \in \textrm{GL}_{2g}(\bQ)$.
	
	\begin{rem}
		Let us consider an isogeny $f:A_1 \rightarrow A_2$ between complex abelian varieties of dimension $g$. Then we have a commutative diagram
		$$\begin{tikzcd}
			\bC^g \arrow{r}{\varphi} \arrow{d} & \bC^g \arrow{d}\\
			\bC^g/\Lambda_1 \arrow{r}{f} & \bC^g/\Lambda_2,
		\end{tikzcd}$$
		where $\varphi$ is an isomorphism obtained by covering theory. The isomorphism $\varphi$ can be expressed by right multiplication by a matrix $M \in \textrm{GL}_{g}(\bC)$. Let us prove that $M$ is uniquely determined by $f$. In fact, if two matrices $M,N$ induce the same isogeny, we have that
		$$z\cdot M \equiv z\cdot N \qquad (\textrm{mod }\Lambda_2), \qquad \qquad \textrm{for all } z \in \bC^g.$$
		Hence the map $z \mapsto z\cdot (M-N)$ sends $\bC^g$ to $\Lambda_2$. Since $\Lambda_2$ is discrete, the map must be constant. This imply $M=N$.
	\end{rem}

	\begin{rem}	
		
		By the previous Remark, the matrix $M=M_b$ considered above is uniquely determined by $f_b$; moreover, the function $b \mapsto M_b$ is a holomorphic function on $B$. In fact, we can consider the following diagram
		$$\begin{tikzcd}
			U\times \bC^g \arrow{r}{\varphi} \arrow{d}{\exp} \arrow[dashed]{rd}{\psi}& U\times \bC^g \arrow{d}{\exp'}\\
			\cA_{|U} \arrow{r}{f} & \cA'_{|U},
		\end{tikzcd}$$
		where $U$ is a simply connected open set, $\psi:=f\circ\exp$ and $\varphi$ is a lift of $\psi$ (it exists because $U\times \bC^g$ is simply connected). We necessarily have $$\varphi_{|\{b\}\times\bC^g}=[M_b],$$ where $[M_b]$ is the right multiplication by $M_b$. Since $\varphi$ is holomorphic, so is $b\mapsto M_b$. Moreover, since $M_b$ is uniquely determined by $f_b$, then the function $b\mapsto M_b$ cannot have non-trivial monodromy along loops. So it is well-defined on the whole of $B$.
	\end{rem}

	Now, let us return to the equation $(\ref{abelianPeriods})$. In particular, it means that the monodromy action of $\pi_1(B)$ on periods of $\cA$ is determined by the monodromy action on the periods of $\cA'$. To be more precise, let us consider a period $\omega'$ with coordinate vector $(u'_1, \ldots, u'_{2g})$ with respect to the basis $\omega'_i$, i.e.
	$$\omega':=(u'_1, \ldots, u'_{2g})\cdot\left(\begin{matrix}
		\omega'_1\\
		\vdots\\
		\omega'_{2g}
	\end{matrix}\right).$$
	By $(\ref{abelianPeriods})$, we obtain
	$$\omega'=(u'_1, \ldots, u'_{2g})\zeta^{-1}\left(\begin{matrix}
		\omega_1M\\
		\vdots\\
		\omega_{2g}M
	\end{matrix}\right).$$
	Let us look at the monodromy action on the bases $\omega'_i$ and $\omega_i$; denote by $\rho, \rho'$ the monodromy representations of $\cA, \cA'$ respectively. Since $M_b$ varies holomorphically and without monodromy with respect to $b \in B$ and $\zeta$ does not depend on $b\in B$, we obtain
	\begin{align*}
		h\cdot \omega' &= h \cdot (u'_1, \ldots, u'_{2g})\left(\begin{matrix}\omega'_1 \\ \vdots \\ \omega'_{2g}\end{matrix}\right)= (u'_1, \ldots, u'_{2g})\rho'(h)^T \left(\begin{matrix}\omega'_1 \\ \vdots \\ \omega'_{2g}\end{matrix}\right),\\
		h\cdot \omega' &= h \cdot (u'_1, \ldots, u'_{2g})\zeta^{-1}\left(\begin{matrix}\omega_1M \\ \vdots \\ \omega_{2g}M\end{matrix}\right)= (u'_1, \ldots, u'_{2g})\zeta^{-1}\rho(h)^T \left(\begin{matrix}\omega_1M \\ \vdots \\ \omega_{2g}M\end{matrix}\right).
	\end{align*}
	Combining the previous relations, we obtain
	\begin{align*}
		(u'_1, \ldots, u'_{2g})\rho'(h)^T\left(\begin{matrix}\omega'_1 \\ \vdots \\ \omega'_{2g}\end{matrix}\right) &= (u'_1, \ldots, u'_{2g})\zeta^{-1}\rho(h)^T \left(\begin{matrix}\omega_1M \\ \vdots \\ \omega_{2g}M\end{matrix}\right)=\\
		&= (u'_1, \ldots, u'_{2g}) \zeta^{-1}\rho(h)^T\zeta \left(\begin{matrix}\omega'_1 \\ \vdots \\ \omega'_{2g}\end{matrix}\right),
	\end{align*}
	for all $u'_1, \ldots, u'_{2g} \in \bZ$. In other terms, the two representations are conjugated between them, i.e. $\rho'(h)=\zeta^T\rho(h)(\zeta^T)^{-1}$. Therefore the periods of $\cA$ are defined over a cover $B^* \rightarrow B$ if and only if the periods of $\cA'$ are. In other words, we have $B_1^*=B_2^*$.
	
	Now, let's study the logarithms of the two abelian schemes. Let $U\subset B$ be a simply connected open set and consider $\log_\sigma, \log_{\sigma'}$:
	
	$$\begin{tikzcd}
		& U\times \bC^g \arrow{d}{\exp} \arrow{r}{\varphi} & U\times \bC^g \arrow{d}{\exp'} & \\
		B \supset U \arrow{r}[swap]{\sigma} \arrow[dashed]{ur} {(\textrm{id},\log_\sigma)} & \cA_{|U} \arrow{r}{f} & \cA'_{|U} & U \subset B \arrow{l}{\sigma'} \arrow[dashed,swap]{ul}{(\textrm{id},\log_{\sigma'})}.
	\end{tikzcd}$$
	
	As said above, the periods of $\cA$ are related to those of $\cA'$ through a matrix $\zeta$ in $\textrm{GL}_{2g}(\bQ)$; we continue to use the above notations. The induced isogeny $f_b$ is given by right multiplication with a matrix $M_b \in \textrm{GL}_{g}(\bC)$; in other words we have the commutative diagram
	$$\begin{tikzcd}
		\bC^{g} \arrow{r}{\cdot M_b} \arrow{d} & \bC^{g} \arrow{d}\\
		\cA_b \arrow{r}{f_{|\cA_b}} & \cA'_b.
	\end{tikzcd}$$
	This means that we can choose $\log_{\sigma'}(b)=\log_\sigma(b)\cdot M_b$, for a point $b \in B$. Fixed $h \in G$ let us recall that $\log_\sigma$ transforms in the following way:
	$$\log_\sigma \mapsto \log_\sigma +(u_1, \ldots, u_{2g})\cdot \left(\begin{matrix}\omega_1 \\ \vdots \\ \omega_{2g}\end{matrix}\right).$$ Moreover, as remarked above, $M_b=M$ varies holomorphically with respect to $b$ and there is no monodromy action on it.
	Therefore, for $\sigma'$ we have
	\begin{align*}
		\log_{\sigma'} = \log_\sigma\cdot M_b &\mapsto \left(\log_\sigma +(u_1, \ldots, u_{2g})\cdot \left(\begin{matrix}\omega_1 \\ \vdots \\ \omega_{2g}\end{matrix}\right)\right)\cdot M_b=\\
		=& \log_\sigma\cdot M +(u_1, \ldots, u_{2g})\cdot \left(\begin{matrix}\omega_1M \\ \vdots \\ \omega_{2g}M\end{matrix}\right)=\\
		=& \log_{\sigma'} + (u_1, \ldots, u_{2g})\zeta \left(\begin{matrix}\omega'_1 \\ \vdots \\ \omega'_{2g}\end{matrix}\right).
	\end{align*}
	Thus, we obtain that the monodromies of logarithms are related in the following way:
	$$\left(\begin{matrix} u'_1\\ \vdots \\u'_{2g}\end{matrix}\right) = \zeta^T \cdot \left(\begin{matrix} u_1\\ \vdots \\u_{2g}\end{matrix}\right),$$
	where $u_i$ and $u'_i$ describe the monodromy of $\log_\sigma, \log_{\sigma'}$, respectively.
	It follows that $B_\sigma=B_{\sigma'}$; this means that if Conjecture $\ref{conj}$ holds for $\cA' \rightarrow B$, then it holds for $\cA \rightarrow B$.\\
\end{proof}

\section{Monodromy of double elliptic logarithms}

Now, we shall analyse \emph{double elliptic schemes} over a same base, usually a curve. Such a scheme may be seen as a fiber product of elliptic schemes, or a pair of elliptic curves defined over a function field of a (same) curve.

To settle things in precise terms, let us suppose to be given two non-isotrivial elliptic schemes $\phi_i: \cE_i \rightarrow B$, for $i=1,2$, over a same base $B$ supposed to be an affine (ramified) cover of $S:= \bP_1 - \{0,1,\infty\}$. By taking a cover of $B$ if necessary, we may assume that these elliptic schemes are pullbacks of the Legendre scheme (i.e. the elliptic scheme $\cL \rightarrow S$ defined by the equation $y^2z=x(x-z)(x-\lambda z), \lambda \in S$). Each of these elliptic schemes has associated periods and monodromy action of $\pi_1(B)$ on the corresponding periods: this action yields subgroups $G_1, G_2$ of $\Gamma_2 \subset \textrm{SL}_2(\bZ)$, both of finite index. In other words we have the corresponding monodromy representations:
\begin{align*}
	\rho_{\cE_1}: &\pi_1(B) \rightarrow G_1 \subset \Gamma_2 \subset \textrm{SL}_2(\bZ),\\
	\rho_{\cE_2}: &\pi_1(B) \rightarrow G_2 \subset \Gamma_2 \subset \textrm{SL}_2(\bZ).
\end{align*}
This setting is equivalent to considering the abelian scheme $\cA:=\cE_1 \times_B \cE_2 \rightarrow B$. Hence, putting together what we said about $\cE_1, \cE_2$, we have a representation
\begin{equation}\label{representationDoubleSchemes}
	\rho=(\rho_{\cE_1},\rho_{\cE_2}): \pi_1(B) \rightarrow G_1\times G_2 \subset \Gamma_2 \times \Gamma_2 \subset \textrm{SL}_2(\bZ)\times \textrm{SL}_2(\bZ),
\end{equation}
where we identify $\textrm{SL}_2(\bZ)\times \textrm{SL}_2(\bZ)$ as a subgroup of $\textrm{SL}_4(\bZ)$. Observe that $\rho$ is exactly the monodromy representation associated with $\cA$, so we denote it by $\rho_\cA$. Thus, we have
$$\rho_\cA(g)=\left(\begin{matrix}
	\rho_{\cE_1}(g) & 0\\
	0 & \rho_{\cE_2}(g)
\end{matrix}\right).$$

If $\cE_1\rightarrow B, \cE_2 \rightarrow B$ are isogenous elliptic schemes (we may assume the isogeny to be defined over $\bC(B)$), then the periods of $\cE_1$ are related to those of $\cE_2$ through a matrix in $\textrm{GL}_2(\bQ)$. This reflects in the fact that there exists a constant matrix $\zeta \in \textrm{GL}_2(\bQ)$ such that $\rho_{\cE_2}=\zeta^{-1}\rho_{\cE_1}\zeta$. Thus, in particular, the image of $\rho_\cA$ is a graph, and the same holds for its Zariski closure in $\textrm{SL}_2(\bC) \times \textrm{SL}_2(\bC)$, so we may express this by considering it to be `small'. The following theorem, which we only state (for a proof see \cite{CZ1}), establishes a converse assertion, namely whether a `small' image necessarily implies the existence of an isogeny.

\begin{thm}\textbf{(Isogeny theorem)}\label{isogenyThm}
	Let $\cE_1, \cE_2$ be elliptic schemes over $B$, as above, and consider the monodromy representations as above. Then either the Zariski closure of the image of $\rho$ is the whole of $\textrm{SL}_2\times \textrm{SL}_2$, or $\cE_1, \cE_2$ are isogenous (over a cover of $B$) and there exists $\sigma \in \textrm{GL}_2(\bQ)$ such that $\rho_2(g)=\sigma^{-1}\rho_1(g)\sigma$ for all $g \in \pi_1(B)$.
	
	Also, for large enough prime number $p$, either the image of $\rho$ in $\textrm{SL}_2(\bZ_p)\times \textrm{SL}_2(\bZ_p)$ is dense in the whole group or we fall into the same conclusion.
\end{thm}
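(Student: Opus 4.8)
The plan is to reduce the statement to the group theory of Zariski-closed (resp.\ $p$-adically closed) subgroups of $\textrm{SL}_2 \x \textrm{SL}_2$ via Goursat's lemma, and then to convert the ``graph'' alternative into an actual isogeny by Hodge theory. First I would record that each projection of the image is as large as possible: since $G_i$ has finite index in $\Gamma_2$ and $\textrm{SL}_2(\bZ)$ is Zariski dense in $\textrm{SL}_2$, a finite-index subgroup of $\textrm{SL}_2(\bZ)$ is again Zariski dense (its closure is a closed subgroup of finite index, hence contains the connected group $\textrm{SL}_2$). Writing $H$ for the Zariski closure of $\rho(\pi_1(B))$ in $\textrm{SL}_2 \x \textrm{SL}_2$, both projections $p_i(H)$ therefore equal $\textrm{SL}_2$.

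Next I would apply Goursat's lemma to the closed subgroup $H$. Set $K_1=\{g:(g,e)\in H\}$ and $K_2=\{g:(e,g)\in H\}$; these are normal in $p_1(H)=\textrm{SL}_2$ and $p_2(H)=\textrm{SL}_2$ respectively, and $H$ induces an isomorphism $\textrm{SL}_2/K_1 \iso \textrm{SL}_2/K_2$. Because $\mathfrak{sl}_2$ is simple, the only Zariski-closed normal subgroups of $\textrm{SL}_2$ are $\{e\}$, the center $\{\pm I\}$, and $\textrm{SL}_2$ itself. If $K_1=\textrm{SL}_2$ (equivalently $K_2=\textrm{SL}_2$) then $H=\textrm{SL}_2\x\textrm{SL}_2$, which is the first alternative. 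Otherwise $K_1,K_2$ are central and we obtain an algebraic automorphism of $\textrm{SL}_2$ (or of $\textrm{PSL}_2$); since the diagram $A_1$ has no symmetry every such automorphism is inner, i.e.\ conjugation by some element of $\textrm{PGL}_2$. Lifting it to $\sigma\in\textrm{GL}_2$ and using that $(\rho_1(g),\rho_2(g))\in H$ for all $g$ yields $\rho_2(g)=\sigma^{-1}\rho_1(g)\sigma$ for every $g$ (a possible sign character $\pi_1(B)\ri\{\pm1\}$ arising when $K_i=\{\pm I\}$ is killed after a degree-two cover of $B$, which is harmless since the conclusion is stated up to a cover).

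It then remains to (a) arrange $\sigma\in\textrm{GL}_2(\bQ)$ and (b) promote the conjugacy to an isogeny. For (a) I would observe that the intertwiner space $\{X\in M_2 : X\rho_2(g)=\rho_1(g)X \text{ for all } g\}$ is cut out by linear equations with integer coefficients, hence is defined over $\bQ$; it contains an invertible element over $\bC$, and since $\det X\neq 0$ is Zariski open and the $\bQ$-points are Zariski dense in this $\bQ$-vector space, it already contains an invertible rational matrix. For (b), the conjugacy says precisely that the $\bQ$-local systems $R^1(\phi_1)_*\bQ$ and $R^1(\phi_2)_*\bQ$ are isomorphic, so the weight-zero variation $\mathcal{V}:=\mathcal{H}om(R^1(\phi_1)_*\bQ,R^1(\phi_2)_*\bQ)$ carries a nonzero monodromy-invariant flat section. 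By Deligne's theorem of the fixed part the space of such invariants underlies a polarized sub-Hodge structure of $\mathcal{V}$; extracting from it a nonzero class of type $(0,0)$ gives a nonzero homomorphism $\cE_1\ri\cE_2$ over $\bC(B)$, and for elliptic curves any nonzero homomorphism is an isogeny. Equivalently, lifting to the universal cover one checks that $\tau_1$ and $\sigma\cdot\tau_2$ are both $\rho_1$-equivariant holomorphic period maps $\widetilde{B}\ri\bH$ and must coincide, so $\cE_{1,b}$ and $\cE_{2,b}$ are related fibrewise by the rational Möbius transformation $\sigma$, hence are isogenous, holomorphically in $b$.

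Finally, the $p$-adic assertion is the same dichotomy over finite groups. For all but finitely many $p$, strong approximation gives that $G_i$ surjects onto $\textrm{SL}_2(\F_p)$, so both projections of the image in $\textrm{SL}_2(\F_p)\x\textrm{SL}_2(\F_p)$ are onto; since $\textrm{PSL}_2(\F_p)$ is simple for $p\ge5$ and its automorphism group is $\textrm{PGL}_2(\F_p)$ (no field automorphisms over the prime field), Goursat's lemma again forces either the full product or the graph of conjugation by an element of $\textrm{PGL}_2(\F_p)$, the latter being the reduction of the isogeny relation. I expect the genuine obstacle to be step (b): the group theory only produces a flat intertwiner, whereas an isogeny requires a Hodge class of type $(0,0)$, and upgrading the former to the latter — equivalently, showing the two equivariant period maps coincide rather than merely being equivariant — is where the non-isotriviality of the $\cE_i$ and the theorem of the fixed part are essential, and where control of the field of definition after a finite cover of $B$ must be exercised. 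The remaining density and Goursat inputs are formal by comparison.
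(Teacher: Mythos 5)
First, a point of reference: the paper does not prove Theorem \ref{isogenyThm} at all --- it is stated and attributed to \cite{CZ1} --- so your attempt can only be compared with the standard argument, not with an internal proof. Your reductions are sound as far as they go: Zariski density of the finite-index subgroups $G_i$, Goursat's lemma for closed subgroups of $\textrm{SL}_2\times\textrm{SL}_2$ (the only closed normal subgroups of $\textrm{SL}_2$ being $\{e\}$, $\{\pm I\}$, $\textrm{SL}_2$), innerness of automorphisms, and rationality of the intertwiner (by Schur the intertwiner space is in fact a line, defined over $\bQ$ and spanned by an invertible element, so your density argument can be simplified). The genuine gap is exactly where you place it, step (b), and you do not close it: ``extracting a nonzero class of type $(0,0)$'' is \emph{not} a formal consequence of the theorem of the fixed part, since a rational polarized Hodge structure of weight zero need not contain any nonzero rational $(0,0)$ class; likewise ``one checks that $\tau_1$ and $\sigma\cdot\tau_2$ must coincide'' is precisely the assertion to be proved, as two $\rho_1$-equivariant holomorphic maps to $\bH$ need not agree. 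The missing argument is this: by the fixed part theorem the evaluation at any $b$ of the space of flat rational sections of $\textrm{Hom}\bigl(R^1(\phi_1)_*\bQ,\,R^1(\phi_2)_*\bQ\bigr)$ is a sub-Hodge structure, so the Hodge components $f^{1,-1}$, $f^{0,0}$, $f^{-1,1}$ of your flat intertwiner $f$ are themselves \emph{flat} sections, of pure type at every fibre. A flat section is a constant matrix in the flat frame on the universal cover; a section of type $(1,-1)$ must kill the line $F^1H^1(\cE_{1,b})$ for every $b$, and one of type $(-1,1)$ must kill its conjugate. Since $\cE_1$ is non-isotrivial these lines move with $b$, while a nonzero $2\times 2$ matrix has a fixed kernel; hence $f^{1,-1}=f^{-1,1}=0$, so $f=f^{0,0}$ is a morphism of Hodge structures on every fibre, i.e.\ an element of $\textrm{Hom}(\cE_{1,b},\cE_{2,b})\otimes\bQ$, and these glue holomorphically (it is the same rational matrix in flat coordinates) into an isogeny over a finite cover of $B$. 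This is the step where non-isotriviality actually enters, and without it the statement is false (constant non-isogenous factors give conjugate --- indeed trivial --- monodromies).

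The $p$-adic paragraph has two further gaps. From surjectivity of the image modulo $p$ you must still deduce density in $\textrm{SL}_2(\bZ_p)\times\textrm{SL}_2(\bZ_p)$; this requires the Frattini-type lemma that, for $p\ge 5$, a closed subgroup of $\textrm{SL}_2(\bZ_p)\times\textrm{SL}_2(\bZ_p)$ surjecting mod $p$ is the whole group. More seriously, your logic in the graph case runs in the wrong direction: observing that an isogeny would \emph{reduce} to a graph mod $p$ proves nothing; one must show that failure of density forces the global isogeny. Two ways to do this: (i) if the mod-$p$ image is a graph of conjugation for infinitely many $p$, then the traces of $\rho_1(\gamma)$ and $\rho_2(\gamma)$ agree modulo $p$ (up to the central sign, which can be eliminated by passing to adjoint representations) for infinitely many $p$, hence agree in $\bZ$, and Brauer--Nesbitt plus absolute irreducibility gives conjugacy over $\bQ$, reducing to part one; or (ii) invoke strong approximation for Zariski-dense subgroups (Nori, Matthews--Vaserstein--Weisfeiler) applied to the image in $\textrm{SL}_2\times\textrm{SL}_2$, which is the input your sketch silently assumes when passing from Zariski density to $p$-adic density. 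Either ingredient must be supplied for the second assertion of the theorem to be proved.
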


Now, let us consider two sections $\sigma_1$ and $\sigma_2$ of $\cE_1 \rightarrow B$ and $\cE_2 \rightarrow B$, respectively. This setting is equivalent to consider the abelian scheme $\cA:=\cE_1 \times_B \cE_2 \rightarrow B$ with a section $\sigma$, whose components are $\sigma_1, \sigma_2$. Any loop $\gamma$ whose homotopy class $g$ is in $\pi_1(B,b_0)$, gives rise to a matrix $\rho_\cA(g) \in \textrm{SL}_2(\bZ)\times \textrm{SL}_2(\bZ)$ which describes the monodromy of periods and to a column vector $w_g \in \bZ^4$ which describes the monodromy of logarithm. Thus we have a representation of the fundamental group $\pi_1(B)$ in $\textrm{SL}_5(\bZ)$, given by
\begin{align*}
	\theta_\sigma: \pi_1(B) &\rightarrow \textrm{SL}_5(\bZ)\\
	g &\mapsto \left(\begin{matrix}
		\rho_\cA(g) & w_g\\
		0 & 1
	\end{matrix}\right),
\end{align*}
where
$$\rho_\cA(g)=\left(\begin{matrix}
	\rho_{\cE_1}(g) & 0\\
	0 & \rho_{\cE_2}(g)
\end{matrix}\right), \quad w_g=\left(\begin{matrix}
	u_{1,g}\\
	u_{2,g}\\
	v_{1,g}\\
	v_{2,g}
\end{matrix}\right).$$
Observe that the logarithms $\log_{\sigma_1}, \log_{\sigma_2}$ transform in the following way
\begin{align*}
	\log_{\sigma_1} &\xmapsto{g} \log_{\sigma_1} + u_{1,g}\omega_{1,\cE_1} + u_{2,g}\omega_{2,\cE_1},\\
	\log_{\sigma_2} &\xmapsto{g} \log_{\sigma_2} + v_{1,g}\omega_{1,\cE_2} + v_{2,g}\omega_{2,\cE_2},
\end{align*}
where $\omega_{1,\cE_i}, \omega_{2,\cE_i}$ denote the periods of $\cE_i \rightarrow B$. Moreover, we have
$$w_{gh}=w_g + \rho_\cA(g)\cdot w_h= \left(\begin{matrix} 
	\left(\begin{matrix} u_{1,g}\\ u_{2,g}\end{matrix}\right) + \rho_{\cE_1}(g)\cdot \left(\begin{matrix} u_{1,h}\\ u_{2,h}\end{matrix}\right)\\
	\\
	\left(\begin{matrix} v_{1,g}\\ v_{2,g}\end{matrix}\right) + \rho_{\cE_2}(g)\cdot \left(\begin{matrix} v_{1,h}\\ v_{2,h}\end{matrix}\right)
\end{matrix}\right).$$

Finally, let us recall the following notations for the elliptic schemes $\cE_i \rightarrow B, i=1,2$:
\begin{align*}
	\theta_{\sigma_i}: \pi_1(B) &\rightarrow \textrm{SL}_3(\bZ)\\
	g &\mapsto \left(\begin{matrix}
		\rho_{\cE_i}(g) & w_{i,g}\\
		0 & 1
	\end{matrix}\right),
\end{align*}
where
$$w_{1,g}:=\left(\begin{matrix}u_{1,g}\\u_{2,g}\end{matrix}\right), \quad w_{2,g}:=\left(\begin{matrix}v_{1,g}\\v_{2,g}\end{matrix}\right).$$

\subsection{Case 1: product of isogenous elliptic schemes}\label{isoSection}
	
	In this section, we shall formulate a result on the monodromy of the logarithm of a section $\sigma$ in the case in which $\cE_1, \cE_2$ are isogenous; we use the above notation $\cA=\cE_1 \times_B \cE_2$. In this case the monodromy representations are conjugate (see Theorem $\ref{isogenyThm}$), so we have $$\ker{\rho_\cA}=\ker{\rho_{\cE_1}}=\ker{\rho_{\cE_2}}.$$
	
	Moreover, in what follows we will make the following identifications: if $g \in \ker{\rho_{\cA}}$ we identify $\theta_{\sigma_i}(g) \equiv w_{i,g} \in \bZ^2$ and $\theta_\sigma(g) \equiv w_g \in \bZ^4$. Moreover, we define
	$$H_1 := \theta_{\sigma_2}(\ker{\theta_{\sigma_1}}), \quad H_2 := \theta_{\sigma_1}(\ker{\theta_{\sigma_2}}).$$
	Now, we are ready for the results of this section.
	 
	\begin{lemma}
		The groups $H_1, H_2$ are isomorphic to either $\{0\}$ or $\bZ^2$.
	\end{lemma}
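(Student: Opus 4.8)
The plan is to read everything off the two homomorphisms recording the monodromy of the two logarithms and to reduce the statement to a single rank computation, the only real issue being to exclude the rank-one possibility.

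\textbf{Reduction to a common lattice.} First I would invoke the Isogeny Theorem \ref{isogenyThm}: since $\cE_1,\cE_2$ are isogenous I fix an isogeny $\phi:\cE_1\to\cE_2$ and replace $\sigma_1$ by $\tau_1:=\phi\circ\sigma_1$, keeping $\tau_2:=\sigma_2$. By the computation in Theorem \ref{isogenyInvariance} the monodromy vector of $\log_{\tau_1}$ is $\zeta^T$ applied to that of $\log_{\sigma_1}$, with $\zeta\in\textrm{GL}_2(\bQ)$ constant; since $\ker\rho_{\cE_1}=\ker\rho_{\cE_2}=\ker\rho_\cA$ and an invertible rational matrix has trivial kernel, this leaves $\ker\theta_{\sigma_1}$ unchanged and alters the relevant images only by a fixed element of $\textrm{GL}_2(\bQ)$, hence preserves all ranks and the isomorphism types of $H_1,H_2$. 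After this reduction $\tau_1,\tau_2$ are sections of the \emph{same} scheme $\cE_2$, measured against one period lattice. On $\ker\rho_\cA$ I write $\beta_i\colon\ker\rho_\cA\to\bZ^2$, $g\mapsto w_{i,g}$, for the additive monodromy homomorphisms, so that $H_1=\beta_2(\ker\beta_1)$ and (up to a fixed element of $\textrm{GL}_2(\bQ)$) $H_2=\beta_1(\ker\beta_2)$ are subgroups of $\bZ^2$, each isomorphic to $\{0\}$, $\bZ$ or $\bZ^2$. The whole content of the lemma is to exclude $\bZ$.

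\textbf{Rank bookkeeping.} Let $L:=\mathrm{im}(\beta_1,\beta_2)\subseteq\bZ^2\times\bZ^2$ and $r:=\mathrm{rank}\,L$. Since $0\times H_1=\{(0,y)\in L\}$ is the kernel of the first projection restricted to $L$, rank additivity gives $\mathrm{rank}\,H_1=r-\mathrm{rank}\,\mathrm{im}\,\beta_1$, and symmetrically $\mathrm{rank}\,H_2=r-\mathrm{rank}\,\mathrm{im}\,\beta_2$. By Theorem \ref{mainTheorem} a non-torsion section of $\cE_2$ has log-monodromy of rank $2$, while by Remark \ref{torsionSection} a torsion one has rank $0$; thus $\mathrm{rank}\,\mathrm{im}\,\beta_i\in\{0,2\}$. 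If one of $\tau_1,\tau_2$ is torsion, say $\tau_1$, then $\beta_1=0$, whence $H_1=\mathrm{im}\,\beta_2\cong\bZ^2$ and $H_2=0$, and we are done. So I assume both non-torsion, giving $\mathrm{rank}\,\mathrm{im}\,\beta_1=\mathrm{rank}\,\mathrm{im}\,\beta_2=2$ and $\mathrm{rank}\,H_1=\mathrm{rank}\,H_2=r-2$. The lemma is now \emph{equivalent} to $r\neq3$.

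\textbf{The torsion dichotomy for combinations.} The key extra input is that for every $(a,b)\in\bZ^2\setminus\{0\}$ the section $a\tau_1+b\tau_2$ of $\cE_2$ has log-monodromy $a\beta_1+b\beta_2$, again of rank $0$ or $2$ by the same dichotomy. If $\tau_1,\tau_2$ are $\bZ$-linearly dependent modulo torsion, some such combination is torsion, forcing $a\beta_1+b\beta_2=0$ over $\bQ$ with $a,b\neq0$; then $\beta_2$ is a rational multiple of $\beta_1$, so $\ker\beta_1$ and $\ker\beta_2$ agree up to finite index, $r=2$, and $H_1=H_2=0$. This settles every case except that of two independent non-torsion sections.

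\textbf{The main obstacle.} It remains to exclude $r=3$ when $\tau_1,\tau_2$ are independent, and this is where I expect the real difficulty: it cannot be obtained by formal group theory. Writing $L_\bQ=\ker f$ for a single functional $f=(p,q,s,t)$, the combination-wise dichotomy only forces $(p,q)$ and $(s,t)$ to be nonzero and non-proportional (otherwise some $a\tau_1+b\tau_2$ would have log-monodromy of rank $1$, which is forbidden), and this is perfectly consistent with $r=3$. Thus ruling out rank one is precisely the transcendental heart of the matter: one must show that two independent sections produce \emph{jointly} maximal ($r=4$) monodromy. I would derive this from the genuine monodromy input behind the elliptic case — the maximality of the Betti map and the André-type conclusion, combined with the Isogeny Theorem \ref{isogenyThm} — applied to the residual situation in which, over the infinite cover trivialising $\log_{\tau_1}$, the monodromy of $\log_{\tau_2}$ is exactly $H_1$. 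Concretely, independence forbids any nonconstant single-valued $\bQ$-combination of the Betti coordinates of $\tau_1,\tau_2$, whereas an $r=3$ lattice would impose exactly such a codimension-one relation $f$; making this contradiction rigorous is the step I expect to be hardest and to require the full strength of the machinery rather than linear algebra alone.
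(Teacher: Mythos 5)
Your reduction to a common lattice, the rank bookkeeping, and the dispatch of the cases where one section is torsion or the two sections are dependent modulo torsion are all correct as far as they go. But the entire content of the lemma is the case you label ``the main obstacle'': both sections non-torsion, where one must exclude $\mathrm{rank}\,H_i=1$ (your $r=3$). There you do not give a proof --- you only sketch a hoped-for contradiction via ``maximality of the Betti map'', an Andr\'e-type statement and the Isogeny Theorem, and you yourself flag that making this rigorous is the hard step. Since that step is exactly what the lemma asserts, the proposal has a genuine gap.

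Moreover, your guiding claim that this step ``cannot be obtained by formal group theory'' is mistaken, and it points to the idea you are missing: throughout, you only use the restrictions of the monodromy homomorphisms to $\ker\rho_\cA$ and $\bZ$-linear combinations of the sections, thereby discarding the conjugation action of the full fundamental group. The paper's proof rests precisely on that action. Since $\ker\theta_{\sigma_1}$ is the kernel of a homomorphism defined on $\pi_1(B)$, it is normal in $\pi_1(B)$; and if $h\in\ker\theta_{\sigma_1}$ has $\log_{\sigma_2}$-monodromy vector $w_{2,h}$, a direct computation (analytic continuation along $g$, then $h$, then $g^{-1}$) shows that the conjugate $g^{-1}hg\in\ker\theta_{\sigma_1}$ has $\log_{\sigma_2}$-monodromy vector $\rho_{\cE_2}(g^{-1})\,w_{2,h}$. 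Hence $H_1\otimes\bQ$ is a subspace of $\bQ^2$ invariant under the whole group $\rho_{\cE_2}(\pi_1(B))$, which is Zariski-dense in $\textrm{SL}_2$ because $\cE_2$ is non-isotrivial, and therefore acts irreducibly: it admits no invariant line. So if $H_1$ were infinite cyclic, its generator would be a common eigenvector of every $\rho_{\cE_2}(g)$, a contradiction; thus $H_1$ (and likewise $H_2$) is either $\{0\}$ or of rank two, i.e.\ $\isom\bZ^2$. This is elementary linear algebra plus normality, with no transcendence or Betti-map input; note that your rank-three configuration $L_\bQ=\ker f$, while indeed consistent with the combination-wise rank dichotomy you derive, is \emph{not} stable under this conjugation action, which is exactly why it cannot occur.
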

	
	\begin{proof}
		At first, observe that $H_1, H_2$ are subgroups of $\bZ^2$; so they are either isomorphic to $\{0\}$ or $\bZ$ or $\bZ^2$. We want to prove that the case $\bZ$ is excluded. We give the proof for $H_1$ since the other case is analogous. Suppose by contradiction that $H_1$ is infinite cyclic: this means that for every $h \in \ker{\theta_{\sigma_1}}$, the logarithm $\log_{\sigma_2}$ of $\sigma_2$ is transformed by $h$ as
		$$\log_{\sigma_2} \xmapsto{h} \log_{\sigma_2} + \chi(h)\omega_{\sigma_2},$$
		for a fixed non-zero period $\omega_{\sigma_2}$ and a homomorphism $\chi:H \rightarrow \bZ$. In particular, let us choose $h$ such that $\chi(h)=1$. Recall that, for $g \in G=\pi_1(B)$, the logarithm $\log_{\sigma_2}$ will be sent by $g$ to a new determination of the form $$\log_{\sigma_2} + v_{1,g}\omega_1 + v_{2,g} \omega_2,$$ where $v_{1,g}, v_{2,g}$ are integers. Recall that the group $G_2=\rho_{\cE_2}(G)$ acts irreducibly on the lattice of periods, since it is Zariski-dense in $\textrm{SL}_2(\bZ)$. Then there exists $g \in G$ such that $\omega_{\sigma_2}$ is not an eigenvector of $\rho_{\cE_2}(g)$. Observe that $\ker{\theta_{\sigma_1}} \unlhd G$ and calculate the action of the element $h'=g^{-1}hg \in \ker{\theta_{\sigma_1}}$, where $g, h$ are the ones just considered. We have
		\begin{align*}
			\log_{\sigma_2} &\xmapsto{g} \log_{\sigma_2} + v_{1,g} \omega_1 + v_{2,g} \omega_2 \xmapsto{h} \log_{\sigma_2} + v_{1,g}\omega_1 + v_{2,g} \omega_2 + \omega_{\sigma_2}\\
			& \xmapsto{g^{-1}} \log_{\sigma_2} +\rho_{\cE_2}(g^{-1})\omega_{\sigma_2}.
		\end{align*}
		In other words this means $\rho_{\cE_2}(g^{-1})\omega_\sigma=\chi(h')\omega_\sigma$, but this is a contradiction since $\omega_\sigma$ is not an eigenvector of $\rho_{\cE_2}(g)$ (nor of $\rho_{\cE_2}(g^{-1})$). This concludes the proof for $H_1$.\\
	\end{proof}
	
	\begin{prop}\label{possibleCasesIso}
		Suppose that at least one between $\sigma_1, \sigma_2$ is non-torsion. The group $\theta_\sigma(\ker{\rho_\cA})$ is isomorphic to either $\bZ^2$ or $\bZ^4$.
	\end{prop}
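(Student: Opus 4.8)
The plan is to exploit the fact that, since $\cE_1$ and $\cE_2$ are isogenous, one has $\ker\rho_\cA=\ker\rho_{\cE_1}=\ker\rho_{\cE_2}=:K$, and to study the group $L:=\theta_\sigma(K)\subset\bZ^4$ through its two coordinate projections onto the factors $\bZ^2$. First I would observe that for $g\in K$ we have $\rho_\cA(g)=\mathrm{id}$, so the cocycle relation $w_{gh}=w_g+\rho_\cA(g)w_h$ collapses to $w_{gh}=w_g+w_h$; hence $g\mapsto w_g$ is a homomorphism and $L$ is a genuine subgroup of $\bZ^4$. Being a subgroup of a free abelian group of rank $4$, $L$ is free of some rank $r\in\{0,1,2,3,4\}$, and the whole task is to show $r\in\{2,4\}$.

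Next I would bring in the two projections $p_1,p_2:\bZ^4=\bZ^2\oplus\bZ^2\to\bZ^2$. Under the identifications fixed above one checks directly that $p_i(L)=\theta_{\sigma_i}(K)$. By Theorem $\ref{mainTheorem}$ applied to the single elliptic scheme $\cE_i$ (recalling $\ker\rho_{\cE_i}=K$), the image $\theta_{\sigma_i}(K)$ is isomorphic to $\bZ^2$ when $\sigma_i$ is non-torsion, while by Remark $\ref{torsionSection}$ it is trivial when $\sigma_i$ is torsion; in either case $\textrm{rank}\,p_i(L)\in\{0,2\}$. On the other hand, the kernel of $p_1$ restricted to $L$ consists of the vectors $(0,w_{2,g})$ with $g\in K$ and $w_{1,g}=0$, i.e. with $g\in\ker\theta_{\sigma_1}$; thus $\ker(p_1|_L)\cong\theta_{\sigma_2}(\ker\theta_{\sigma_1})=H_1$, and symmetrically $\ker(p_2|_L)\cong H_2$.

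Now the two short exact sequences $0\to\ker(p_i|_L)\to L\to p_i(L)\to 0$ give the rank identity
$$\textrm{rank}\,L=\textrm{rank}\,H_1+\textrm{rank}\,p_1(L)=\textrm{rank}\,H_2+\textrm{rank}\,p_2(L).$$
By the preceding Lemma each $H_i$ is isomorphic to $\{0\}$ or $\bZ^2$, so $\textrm{rank}\,H_i\in\{0,2\}$, and we have just seen $\textrm{rank}\,p_i(L)\in\{0,2\}$; hence $\textrm{rank}\,L$ is a sum of two elements of $\{0,2\}$, so $\textrm{rank}\,L\in\{0,2,4\}$. This already rules out the ``odd'' ranks $1$ and $3$. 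Finally, to exclude $\textrm{rank}\,L=0$ I would invoke the standing hypothesis that at least one $\sigma_i$ is non-torsion: if $L=0$ then both projections $p_i(L)$ vanish, forcing both sections to be torsion by Theorem $\ref{mainTheorem}$, a contradiction. Therefore $\textrm{rank}\,L\in\{2,4\}$ and, being free, $L\cong\bZ^2$ or $L\cong\bZ^4$.

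The computation itself is light; the real input is borrowed, namely the Lemma on the shape of $H_1,H_2$ and the single-scheme Theorem $\ref{mainTheorem}$. The step requiring the most care is the identification $\ker(p_i|_L)\cong H_i$, that is, correctly matching the condition $w_{1,g}=0$ for $g\in K$ with membership in $\ker\theta_{\sigma_1}$, together with checking that the torsion case genuinely yields $p_i(L)=0$ rather than merely a proper subgroup; this is exactly what makes the rank bookkeeping collapse to $\{0,2,4\}$ and forbids the intermediate values.
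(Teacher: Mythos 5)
Your proposal is correct and takes essentially the same approach as the paper: the same two inputs (Theorem $\ref{mainTheorem}$, supplemented by Remark $\ref{torsionSection}$ for the torsion case, and the Lemma giving $H_1,H_2\in\{0,\bZ^2\}$) and the same decomposition of $\theta_\sigma(\ker\rho_\cA)$ via the coordinate projections onto the two $\bZ^2$ factors. The only difference is in packaging: the paper's two-case argument (any three elements are dependent when $H_1\cong\{0\}$; four explicit independent elements when $H_1\cong\bZ^2$) is precisely your rank-additivity identity $\mathrm{rank}\,L=\mathrm{rank}\,H_1+\mathrm{rank}\,p_1(L)$ carried out by hand, with the paper fixing $\sigma_1$ non-torsion at the outset so that $p_1(L)\cong\bZ^2$ throughout.
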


	\begin{proof}
		We prove the theorem supposing that $\sigma_1$ is non-torsion.
		
		Recall that by Theorem $\ref{mainTheorem}$, we have $\theta_{\sigma_1}(\ker{\rho_{\cE_1}}) \isom \bZ^2$. Since $\ker{\rho_\cA}=\ker{\rho_{\cE_1}}$, then we have $2 \le \textrm{rank}\, \theta_\sigma(\ker{\rho_\cA}) \le 4$. By the previous lemma, we only have two possibilities for $H_1$, i.e. $H_1 \isom \{0\}$ or $H_1\isom \bZ^2$.\\
		
		\textbf{Case 1: $H_1\isom\{0\}$}\\
		The condition $H_1\isom\{0\}$ means that for each $g \in \ker{\rho_\cA}$ if $u_{1,g}=u_{2,g}=0$, then $v_{1,g}=v_{2,g}=0$, where the notation is the same as above. Let us prove that $\textrm{rank}\, \theta_\sigma(\ker{\rho_\cA})=2$ by proving that any three elements of the form $w_g, w_h, w_k \in \theta_\sigma(\ker{\rho_\cA})$ are linearly dependent on $\bZ$.
		
		Since $\theta_{\sigma_1}(\ker{\rho_{\cE_1}}) \isom \bZ^2$, given any three elements $g,h,k \in \ker{\rho_\cA} \subset \ker{\rho_{\cE_1}}$, there always exists $n_g, n_h, n_k \in \bZ$, not all zero, such that
		$$n_g\left(\begin{matrix}
			u_{1,g}\\
			u_{2,g}
		\end{matrix}\right) + n_h\left(\begin{matrix}
			u_{1,h}\\
			u_{2,h}
		\end{matrix}\right) + n_k\left(\begin{matrix}
			u_{1,k}\\
			u_{2,k}
		\end{matrix}\right)= \left(\begin{matrix}
			0\\
			0
		\end{matrix}\right).$$
		Thus we have
		$$\theta_\sigma(k^{n_k}h^{n_h}g^{n_g})=\left(\begin{matrix}
			\begin{matrix}
				\textrm{id}_4
			\end{matrix} & \begin{matrix}
			0\\
			0\\
			n_gv_{1,g} + n_hv_{1,h} +n_kv_{1,k}\\
			n_gv_{2,g} + n_hv_{2,h} +n_kv_{2,k}
		\end{matrix}\\
		0 & 1
		\end{matrix}\right).$$
		For what we observed at the beginning of this proof, $H_1\isom\{0\}$ implies that
		$$n_g\left(\begin{matrix}
			v_{1,g}\\
			v_{2,g}
		\end{matrix}\right) + n_h\left(\begin{matrix}
			v_{1,h}\\
			v_{2,h}
		\end{matrix}\right) + n_k\left(\begin{matrix}
			v_{1,k}\\
			v_{2,k}
		\end{matrix}\right) = \left(\begin{matrix}
			0\\
			0
		\end{matrix}\right).$$
		Therefore any three elements of $\bZ^4$ of the form $w_g, w_h, w_k$ are linearly dependent on $\bZ$, so $\theta_\sigma(\ker{\rho_\cA})\isom \bZ^2$.\\
		
		\textbf{Case 2: $H_1\isom \bZ^2$}\\
		Observe that this condition says that $\sigma_2$ is a non-torsion section, too.
		
		Since $H_1\isom \bZ^2$, we can consider a $\bZ$-basis for it and the following corresponding elements of $\theta_\sigma(\ker{\rho_\cA})$:
		$$w_3=\left(\begin{matrix}
			0\\
			0\\
			v_{1,k}\\
			v_{2,k}
		\end{matrix}\right), w_4=\left(\begin{matrix}
			0\\
			0\\
			v_{1,l}\\
			v_{2,l}
		\end{matrix}\right),
		\qquad \textrm{where } k,l \in \ker{\theta_{\sigma_1}}\subset\ker{\rho_\cA}.$$
		Since $\theta_{\sigma_1}(\ker{\rho_{\cE_1}})$ has also rank two, let us choose a $\bZ$-basis $\left(\begin{matrix}
			u_{1,g}\\
			u_{2,g}
		\end{matrix}\right), \left(\begin{matrix}
			u_{1,h}\\
			u_{2,h}
		\end{matrix}\right)$ of it, where $g,h \in \ker{\rho_\cA}$, and consider the corresponding elements of $\theta_\sigma(\ker{\rho_\cA})$:
		$$z_1=\left(\begin{matrix}
			u_{1,g}\\
			u_{2,g}\\
			v_{1,g}\\
			v_{2,g}
		\end{matrix}\right), z_2=\left(\begin{matrix}
			u_{1,h}\\
			u_{2,h}\\
			v_{1,h}\\
			v_{2,h}
		\end{matrix}\right).$$
		With an appropriate linear combination of $z_1, z_2, w_3, w_4$ we obtain that
		$$w_1=\left(\begin{matrix}
			u_{1,g}\\
			u_{2,g}\\
			0\\
			0
		\end{matrix}\right), w_2=\left(\begin{matrix}
			u_{1,h}\\
			u_{2,h}\\
			0\\
			0
		\end{matrix}\right)$$
		are elements of $\theta_\sigma(\ker{\rho_\cA})$. Moreover, $w_1, w_2, w_3, w_4$ are linearly independent over $\bZ$. Thus $\theta_\sigma(\ker{\rho_\cA}) \isom \bZ^4$.		
\end{proof}

	\subsubsection{Main Theorem}
	
	Recall that we are considering an abelian scheme $\cA:= \cE_1 \times_B \cE_2$, where we assume that $\cE_1$ and $\cE_2$ are isogenous, i.e. there exists an isogeny $\phi:\cE_1 \rightarrow \cE_2$. This isogeny induces an isogeny $\phi_\cA:=(\phi,\textrm{id}_{\cE_2}): \cE_1\times_B\cE_2 \rightarrow \cE_2\times_B\cE_2$. Since our theorem is invariant under isogeny, we can just study the case $\cA:=\cE\times_B \cE$, where $\cE \rightarrow B$ is an elliptic scheme.	
	
	\begin{thm}\label{squareProduct}
		Let $\sigma_1, \sigma_2: B \rightarrow \cE$ be rational sections of an elliptic scheme and suppose that at least one of the sections $\sigma_1, \sigma_2$ is non-torsion. Let us consider the abelian scheme $\pi: \cA:=\cE\times_B\cE \rightarrow B$ endowed with the (non-torsion) section $\sigma=(\sigma_1,\sigma_2)$. We have the following situation:
		\begin{enumerate}
			\item if $\sigma_1, \sigma_2$ are linearly dependent over $\bZ$, the cover $B_\sigma\rightarrow B^*$ has infinite degree and its Galois group is isomorphic to $\bZ^2$;
			
			\item if $\sigma_1, \sigma_2$ are linearly independent over $\bZ$, the cover $B_\sigma\rightarrow B^*$ has infinite degree and its Galois group is isomorphic to $\bZ^4$.
		\end{enumerate}
	\end{thm}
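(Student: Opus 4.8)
The plan is to identify the Galois group of $B_\sigma \to B^*$ with the monodromy group $\theta_\sigma(\ker\rho_\cA)$ and then to invoke Proposition $\ref{possibleCasesIso}$, which already guarantees that this group is $\isom \bZ^2$ or $\isom \bZ^4$ — in particular infinite, so that $B_\sigma\to B^*$ has infinite degree in both cases. The proof of that proposition shows, moreover, that $\theta_\sigma(\ker\rho_\cA)\isom\bZ^2$ exactly when $H_1\isom\{0\}$ and $\theta_\sigma(\ker\rho_\cA)\isom\bZ^4$ exactly when $H_1\isom\bZ^2$. Assuming without loss of generality that $\sigma_1$ is non-torsion, the whole theorem will therefore reduce to the single equivalence: $\sigma_1,\sigma_2$ are linearly dependent over $\bZ$ (i.e. $m\sigma_1+n\sigma_2$ is torsion for some $(m,n)\ne(0,0)$) if and only if $H_1\isom\{0\}$. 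Throughout I will use three facts: on $\ker\rho_\cA=\ker\rho_\cE$ the map $g\mapsto w_g=(w_{1,g},w_{2,g})$ is a homomorphism (since $w_{gh}=w_g+\rho_\cA(g)w_h$ with $\rho_\cA(g)=\textrm{id}$); $m\log_{\sigma_1}+n\log_{\sigma_2}$ is an admissible logarithm of $m\sigma_1+n\sigma_2$; and a torsion section has trivial logarithm-monodromy on $B^*$ (Remark $\ref{torsionSection}$).

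For the forward direction I would argue as follows. If $m\sigma_1+n\sigma_2$ is torsion with $(m,n)\ne(0,0)$, then $n\ne0$ because $\sigma_1$ is non-torsion. Reading off the monodromy of $m\log_{\sigma_1}+n\log_{\sigma_2}$ along any $g\in\ker\rho_\cA$ and using Remark $\ref{torsionSection}$ yields $m\,w_{1,g}+n\,w_{2,g}=0$ for every such $g$. In particular, for $h\in\ker\theta_{\sigma_1}$ one has $w_{1,h}=0$, hence $n\,w_{2,h}=0$, and so $w_{2,h}=0$ since $\bZ^2$ is torsion-free and $n\ne0$. This gives $H_1=\{0\}$, and Proposition $\ref{possibleCasesIso}$ then yields Galois group $\bZ^2$, which is case (1).

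For the converse, suppose $H_1\isom\{0\}$, i.e. $\ker(w_1)\subseteq\ker(w_2)$ for the homomorphisms $w_i\colon\ker\rho_\cA\to\bZ^2$, $g\mapsto w_{i,g}$. By Theorem $\ref{mainTheorem}$ the image of $w_1$ has rank $2$, so $w_2$ factors through $w_1$ by a rational matrix: there is $\Psi\in M_2(\bQ)$ with $w_{2,g}=\Psi\,w_{1,g}$ for all $g\in\ker\rho_\cA$. The decisive step is to upgrade $\Psi$ to a scalar by bringing in the full group $G=\pi_1(B)$. For $h\in\ker\rho_\cA$ and $g\in G$ a short computation with $\theta_{\sigma_i}$ gives $w_{i,ghg^{-1}}=\rho_\cE(g)\,w_{i,h}$, where the \emph{same} $\rho_\cE$ appears in both factors because $\cE_1=\cE_2=\cE$. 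Applying $w_2=\Psi w_1$ to $ghg^{-1}$ then gives $\rho_\cE(g)\Psi\,w_{1,h}=\Psi\rho_\cE(g)\,w_{1,h}$; since the $w_{1,h}$ span $\bQ^2$, this forces $\rho_\cE(g)\Psi=\Psi\rho_\cE(g)$ for all $g\in G$. As $\rho_\cE(G)$ has finite index in $\textrm{SL}_2(\bZ)$ and is therefore Zariski-dense in $\textrm{SL}_2$, Schur's lemma makes $\Psi=\lambda\,\textrm{id}$ with $\lambda\in\bQ$. Writing $\lambda=p/q$ in lowest terms, I get $q\,w_{2,g}-p\,w_{1,g}=0$ on $\ker\rho_\cA$, so $\log_{q\sigma_2-p\sigma_1}$ has trivial monodromy on $B^*$; by Theorem $\ref{mainTheorem}$ a non-torsion section cannot have this property, so $q\sigma_2-p\sigma_1$ is torsion and $\sigma_1,\sigma_2$ are linearly dependent. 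Contrapositively, linear independence forces $H_1\ne\{0\}$, hence $H_1\isom\bZ^2$ by the lemma preceding Proposition $\ref{possibleCasesIso}$, and Galois group $\bZ^4$, which is case (2).

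The main obstacle is precisely the scalar reduction of $\Psi$ in the converse: the hypothesis $H_1=\{0\}$ on its own only produces a rational intertwiner $\Psi$ relating the two period-monodromy cocycles, and it is genuinely necessary to exploit the monodromy of all of $G$ — not merely of $\ker\rho_\cA$ — together with the Zariski-density of $\rho_\cE(G)$ in $\textrm{SL}_2$ to conclude that $\Psi$ commutes with the whole group and is therefore scalar. Once $\Psi=\lambda\,\textrm{id}$ is established, the integral linear relation between $\log_{\sigma_1}$ and $\log_{\sigma_2}$, and hence the linear dependence of the sections, follows immediately.
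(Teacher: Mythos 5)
Your proposal is correct and follows essentially the same route as the paper's proof: both identify the Galois group with $\theta_\sigma(\ker\rho_\cA)$, reduce via Proposition $\ref{possibleCasesIso}$ to the dichotomy $H_1\isom\{0\}$ versus $H_1\isom\bZ^2$, extract a rational intertwiner between the two logarithm cocycles, force it to be scalar using the conjugation relation $w_{i,ghg^{-1}}=\rho_\cE(g)\,w_{i,h}$ together with Zariski-density of $\rho_\cE(\pi_1(B))$ in $\textrm{SL}_2$, and conclude via Theorem $\ref{mainTheorem}$ that a nontrivial integer combination of $\sigma_1,\sigma_2$ is torsion. Your departures are only cosmetic: you phrase case (2) as the equivalence ``linearly dependent $\Leftrightarrow H_1\isom\{0\}$'' rather than a proof by contradiction, you obtain the intertwiner without needing it to be invertible (the paper takes $M\in\textrm{GL}_2(\bQ)$ via injectivity of both projections), and you invoke Schur's lemma where the paper performs the explicit computation with unipotent generators.
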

	
	\begin{proof} 
		
		\begin{enumerate}
			\item We are supposing that $\sigma_1, \sigma_2$ are linearly dependent over $\bZ$. So there exists $n_1, n_2 \in \bZ$ such that $$n_1\sigma_1 + n_2\sigma_2=0.$$ Now let us consider the corresponding elliptic logarithms $\log_{\sigma_1}, \log_{\sigma_2}$. On some domain $U \subset B$ on which they are well-defined, the linear dependence relation between the sections induces the following relation: $$n_1\log_{\sigma_1}+n_2\log_{\sigma_2}= \omega,$$ where $\omega(b) \in \Lambda_b$ is a period for each $b \in U$. By Theorem $\ref{mainTheorem}$ we know that $\theta_{\sigma_1}(\ker{\rho_\cE})\isom \bZ^2$. So let us fix a loop $\alpha$ in $B$ whose homotopy class is $g \in \ker{\rho_\cE}$ and also denote
			$$w_{1,g}=\theta_{\sigma_1}(g)=\left(\begin{matrix}
				u_{1,g}\\
				u_{2,g}
			\end{matrix}\right),
			\quad
			w_{2,g}=\theta_{\sigma_2}(g)=\left(\begin{matrix}
				v_{1,g}\\
				v_{2,g}
			\end{matrix}\right)\in \bZ^2.$$
			Now we analytically continue the relation $n_1\log_{\sigma_1}+n_2\log_{\sigma_2}= \omega$ along $\alpha$, by considering that $\omega$ remain unchanged since $g \in \ker{\rho_\cE}$. So we obtain
			$$n_1\log_{\sigma_1} +n_1u_{1,g}\omega_1 + n_1u_{2,g} \omega_2 +n_2\log_{\sigma_2} + n_2v_{1,g}\omega_1 + n_2v_{2,g} \omega_2 = \omega.$$
			Therefore we have $$(n_1u_{1,g} + n_2v_{1,g})\omega_1 + (n_1u_{2,g} + n_2v_{2,g})\omega_2=0,$$ which is the same as writing
			$$n_1\left(\begin{matrix}
				u_{1,g}\\
				u_{2,g}
			\end{matrix}\right) +	n_2\left(\begin{matrix}
				v_{1,g}\\
				v_{2,g}
			\end{matrix}\right) = \left(\begin{matrix}
				0\\
				0
			\end{matrix}\right).$$
			By arbitrariness of $g$ we have $n_1w_{1,g}+n_2w_{2,g}=0$ for all $g\in \ker{\rho_\cA}$. In other words we have $$w_{2,g} = -\frac{n_1}{n_2}w_{1,g}.$$ Then the map
			\begin{align*}
				\left(\begin{matrix}
					w_{1,g}\\
					w_{2,g}
				\end{matrix}\right) \longmapsto w_{1,g}
			\end{align*}
			is an isomorphism, so $$\theta_\sigma(\ker{\rho_\cA})\isom \bZ^2$$ and the first part is proved.
			
			\item Now, let $\sigma_1, \sigma_2$ be linearly independent sections and let us suppose by contradiction that $\theta_\sigma(\ker{\rho_\cA})$ is not isomorphic to $\bZ^4$. Let us introduce the following notations:
			\begin{align*}
			K&:=\theta_\sigma(\ker{\rho_\cA})= \left\{\left(\begin{matrix}
				u_{1,g}\\
				u_{2,g}\\
				v_{1,g}\\
				v_{2,g}
			\end{matrix}\right): g \in \ker{\rho_\cA}\right\}\\
			K_1&:= \theta_{\sigma_1}(\ker{\rho_\cA})=\left\{\left(\begin{matrix}
				u_{1,g}\\
				u_{2,g}
			\end{matrix}\right): g \in \ker{\rho_\cA}\right\}\\
			K_2&:= \theta_{\sigma_2}(\ker{\rho_\cA})=\left\{\left(\begin{matrix}
				v_{1,g}\\
				v_{2,g}
			\end{matrix}\right): g \in \ker{\rho_\cA}\right\}.		
			\end{align*}
			
			By Proposition $\ref{possibleCasesIso}$, we have that $K \isom \bZ^2$. Moreover, by Theorem $\ref{mainTheorem}$, since $\sigma_1, \sigma_2$ are linearly independent hence in particular non-torsion, we also have $K_1 \isom \bZ^2$ and $K_2 \isom \bZ^2$.

			\textbf{Claim 1: } There exists a matrix $M \in \textrm{GL}_2(\bQ)$ such that
		$$M\cdot \left(\begin{matrix}
				u_{1,g}\\
				u_{2,g}\end{matrix}\right) = \left(\begin{matrix}
				v_{1,g}\\
				v_{2,g}\end{matrix}\right)$$
		for all $g \in \ker{\rho_\cA}$.

			\textbf{Proof of Claim 1}
			
			Let us define the projections $p_1:K \rightarrow K_1$ and $p_2:K \rightarrow K_2$ as follows:
			$$p_1\left(\begin{matrix}
				u_{1,g}\\
				u_{2,g}\\
				v_{1,g}\\
				v_{2,g}
			\end{matrix}\right)= \left(\begin{matrix}
				u_{1,g}\\
				u_{2,g}\\
			\end{matrix}\right)
			\qquad
			p_2\left(\begin{matrix}
				u_{1,g}\\
				u_{2,g}\\
				v_{1,g}\\
				v_{2,g}
			\end{matrix}\right)= \left(\begin{matrix}
				v_{1,g}\\
				v_{2,g}\\
			\end{matrix}\right).$$
			
			Observe that, in our hypothesis, both the projections $p_1, p_2$ have to be injective. In fact, if one of them, say $p_1$, would be non-injective, then there exists a non-trivial $g \in \ker{p_1}$. This means that the group $H_1=\theta_{\sigma_2}(\ker{\theta_{\sigma_1}})$ defined above is non-trivial. By Proposition $\ref{possibleCasesIso}$, we then would have $\theta_{\sigma}(\ker{\rho_\cA})\isom \bZ^4$, which is a contradiction.
			
			Therefore, $p_1$ and $p_2$ are isomorphisms. Thus, we can define the isomorphism $\varphi:=p_2\circ p_1^{-1}:K_1 \rightarrow K_2$ which maps 
			$$\left(\begin{matrix}
				u_{1,g}\\
				u_{2,g}\\
			\end{matrix}\right)
			\mapsto
			\left(\begin{matrix}
				v_{1,g}\\
				v_{2,g}\\
			\end{matrix}\right).$$
			
			Then, there exists a matrix $M=\left(\begin{matrix}
				\alpha & \beta\\
				\gamma & \delta
			\end{matrix}\right) \in \textrm{GL}_2(\bQ)$, such that
			$$\left(\begin{matrix}
				v_{1,g}\\
				v_{2,g}
			\end{matrix}\right) = M \cdot \left(\begin{matrix}
				u_{1,g}\\
				u_{2,g}
			\end{matrix}\right) = \left(\begin{matrix}
				\alpha u_{1,g} + \beta u_{2,g}\\
				\gamma u_{1,g} + \delta u_{2,g}
			\end{matrix}\right) \qquad \textrm{for all } g \in \ker{\rho_\cA}.$$
			
			To find this matrix $M$, let us choose a basis			
			$$\left(\begin{matrix}
				u_{1,g}\\
				u_{2,g}
			\end{matrix}\right), \quad \left(\begin{matrix}
			u_{1,h}\\
			u_{2,h}
			\end{matrix}\right)$$
			of $K_1$, where $g,h$ are two fixed elements of $\ker{\rho_\cA}$. By imposing the conditions
			$$M\cdot\left(\begin{matrix}
				u_{1,g}\\
				u_{2,g}
			\end{matrix}\right) = \left(\begin{matrix}
				v_{1,g}\\
				v_{2,g}
			\end{matrix}\right), \quad \textrm{and} \quad M\cdot\left(\begin{matrix}
			u_{1,h}\\
			u_{2,h}
			\end{matrix}\right)= \left(\begin{matrix}
				v_{1,h}\\
				v_{2,h}
			\end{matrix}\right),$$
			we obtain a system where the unknowns are the coefficients of $M$. By solving this, we find $M$. Since $\varphi:K_1 \rightarrow K_2$ is an isomorphism, we have that the relation
			$$M\cdot\left(\begin{matrix}
				u_{1,g}\\
				u_{2,g}
			\end{matrix}\right) = \left(\begin{matrix}
				v_{1,g}\\
				v_{2,g}
			\end{matrix}\right)$$
			is true for all $g \in \ker{\rho_\cA}$.

			\textbf{Claim 2: } The matrix $M$ is of the form $$M=\left(\begin{matrix} \alpha & 0\\ 0 & \alpha \end{matrix}\right),$$ where $\alpha \in\bQ$.

			\textbf{Proof of Claim 2}

			Now, let us choose an element $h \in \ker{\rho_\cA}$ and an element $g \in \pi_1(B)$. Let us use the following notation
			$$\theta_{\sigma_1}(h)=\left(\begin{matrix} u_{1,h} \\ u_{2,h}\end{matrix}\right)
			\qquad
			\theta_{\sigma_2}(h)=\left(\begin{matrix} v_{1,h} \\ v_{2,h}\end{matrix}\right),$$
			and let us consider the following periods
			$$\omega_{h,\sigma_1}:= u_{1,h}\omega_1 + u_{2,h}\omega_2,
			\qquad
			\omega_{h,\sigma_2}:= v_{1,h}\omega_1 + v_{2,h}\omega_2.$$
			Moreover, we will indicate with $\omega_{g,\sigma_1}, \omega_{g,\sigma_2}$ the variation of $\log_{\sigma_1}, \log_{\sigma_2}$ along $g$, respectively.
			
			Finally, let us consider the element $h':=ghg^{-1} \in \ker{\rho_\cA}$ and use an analogous notation as above, i.e.
			$$\omega_{h',\sigma_1}:= u_{1,h'}\omega_1 + u_{2,h'}\omega_2,
			\qquad
			\omega_{h',\sigma_2}:= v_{1,h'}\omega_1 + v_{2,h'}\omega_2.$$	
			
			If we look at the action of $h'$ on the determination of $\log_{\sigma_1}$ we obtain
			\begin{align*}
				\log_{\sigma_1} \xrightarrow{g^{-1}} \log_{\sigma_1} - c_{g^{-1}}(\omega_{g,\sigma_1}) &\xrightarrow{h} \log_{\sigma_1} - c_{g^{-1}}(\omega_{g,\sigma_1}) + \omega_{h,\sigma_1}\\ &\xrightarrow{g} \log_{\sigma_1} + c_g(\omega_{h,\sigma_1}).
			\end{align*}
			
			At the same way, if we look at the action of $h'$ on the determination of $\log_{\sigma_2}$ we obtain
			$$\log_{\sigma_2} \xrightarrow{h'} \log_{\sigma_2} + c_g(\omega_{h,\sigma_2}).$$
			We can resume this by the following equations:
			$$\omega_{h',\sigma_1} = c_g(\omega_{h,\sigma_1}), \qquad \omega_{h',\sigma_2} = c_g(\omega_{h,\sigma_2}).$$ In terms of coordinates, this means
			\begin{equation}\label{firstIso}
				\left(\begin{matrix}
					u_{1,h'}\\
					u_{2,h'}
				\end{matrix}\right) = \rho_\cE(g)\cdot \left(\begin{matrix}
					u_{1,h}\\
					u_{2,h}
				\end{matrix}\right),
				\qquad
				\left(\begin{matrix}
					v_{1,h'}\\
					v_{2,h'}
				\end{matrix}\right) = \rho_\cE(g)\cdot \left(\begin{matrix}
					v_{1,h}\\
					v_{2,h}
				\end{matrix}\right).
			\end{equation}
			
			Moreover, since $h, h' \in \ker{\rho_\cA}$, by Claim $1$ we have that
			\begin{equation}\label{secondIso}
				M\cdot \left(\begin{matrix}
					u_{1,h}\\
					u_{2,h}
				\end{matrix}\right) = \left(\begin{matrix}
					v_{1,h}\\
					v_{2,h}
				\end{matrix}\right),
				\qquad
				M\cdot \left(\begin{matrix}
					u_{1,h'}\\
					u_{2,h'}
				\end{matrix}\right) = \left(\begin{matrix}
					v_{1,h'}\\
					v_{2,h'}
				\end{matrix}\right).
			\end{equation}
			
			Now, we are ready to put all together: by $(\ref{firstIso})$ and $(\ref{secondIso})$ we obtain
			\begin{equation}\label{thirdIso}
				\left( M\rho_\cE(g) - \rho_\cE(g) M\right)\cdot \left(\begin{matrix}
					u_{1,h}\\
					u_{2,h}
				\end{matrix}\right)=0,
			\end{equation}
			for all $g \in \pi_1(B)$ and $h \in \ker{\rho_\cA}$.
			Since $\rho_\cE(\pi_1(B))$ is Zariski-dense in $\textrm{SL}_2(\bZ)$, this last relation has to be true for every matrix $A \in \textrm{SL}_2(\bZ)$ in place of $\rho_\cE(g)$ (observe that the relation does not depend on $u_{1,g}, u_{2,g}$).
			Let us choose $$A=\left( \begin{matrix} 1 & 0\\ 2 & 1 \end{matrix} \right).$$ Then for each $h \in \ker{\rho_\cA}$, the relation $(\ref{thirdIso})$ gives us the following equation
			\begin{align*}
				\left(\begin{matrix} 2\beta & 0\\ 2(\delta-\alpha) & -2\beta \end{matrix}\right)
				\cdot
				\left( \begin{matrix} u_{1,h}\\ u_{2,h} \end{matrix} \right)=0.
			\end{align*}
			Since $\theta_{\sigma_1}(\ker{\rho_\cE})\isom \bZ^2$ and $\ker{\rho_\cA}=\ker{\rho_\cE}$, there exists $h \in \ker{\rho_\cA}$ such that $u_{1,h}\neq 0$. It follows that $$\beta=0, \qquad \alpha=\delta.$$
			By choosing $A=\left(\begin{matrix} 1 & 2\\ 0 &1\end{matrix}\right)$ we also obtain $\gamma=0$. So the matrix $\sigma$ have the following form:
			$$M=\left(\begin{matrix} \alpha & 0\\ 0 & \alpha \end{matrix}\right),$$ where $\alpha \in\bQ$, say $\alpha:=\frac{m}{n}$.

			\textbf{End of the proof}
			
			Claim $1$ and Claim $2$ mean that $v_{1,h}=\alpha u_{1,h}, v_{2,h}=\alpha u_{2,h}$ for all $h \in \ker{\rho_\cA}$.
			In other words the logarithm of $\sigma_2$ has the following variation under the action of each $h \in \ker{\rho_\cA}$:
			$$ \log_{\sigma_2} \mapsto \log_{\sigma_2} + \alpha u_{1,h} \omega_1 + \alpha u_{2,h} \omega_2.$$
			Now, let us consider the sections $m\cdot \sigma_1$ and $n\sigma_2$. Observe that for each $h \in \ker{\rho_\cA}$ we have
			\begin{align*}
				\log_{m\sigma_1}=m\log_{\sigma_1} &\mapsto m\log_{\sigma_1} + mu_{1,h} \omega_1 + mu_{2,h} \omega_2,\\
				\log_{n\sigma_2} = n\log_{\sigma_2}&\mapsto n\log_{\sigma_2} + mu_{1,h} \omega_1 + mu_{2,h} \omega_2.
			\end{align*}
			Therefore, we can define the section $\widetilde{\sigma}:= m\sigma_1 - n\sigma_2$ of $\cE\rightarrow B$. Observe that for each $h \in \ker{\rho_\cA}=\ker{\rho_\cE}$ we have
			$$\log_{\widetilde{\sigma}} \mapsto \log_{\widetilde{\sigma}}.$$
			This means that $\theta_{\widetilde{\sigma}}(\ker{\rho_\cE})$ is trivial. By Theorem $\ref{mainTheorem}$, it follows that $\widetilde{\sigma}$ is a torsion section, i.e. we have $$k\widetilde{\sigma}=0,$$
			and this means that $\sigma_1$ and $\sigma_2$ are linearly dependent over $\bZ$; this contradiction concludes the proof.
			
		\end{enumerate}
	\end{proof}
	
	\begin{rem}
		Let us consider the abelian scheme $\cA:=\cE\times_B\cE \rightarrow B$. It is well known that a pair $(P_1,P_2) \in \cA_b$ is contained in a proper group-subscheme of $\cA \rightarrow B$ if and only if there exist $n_1, n_2 \in \bZ$ such that $n_1P_1+n_2P_2=0$. In other words, saying that the image of a section $\sigma=(\sigma_1, \sigma_2)$ is not contained in a proper group-subscheme is equivalent to saying that $\sigma_1, \sigma_2$ are linearly independent over $\bZ$. Thus, Theorem $\ref{squareProduct}$ proves Conjecture $\ref{conj}$ for the case $\cA=\cE\times_B \cE$.
	\end{rem}
	
	By Proposition $\ref{isogenyInvariance}$, we deduce the theorem in the case of product of two isogenous elliptic schemes, which reads as follows:
	
	\begin{thm}\label{isoProduct}
		Let $\sigma_1: B \rightarrow \cE_1, \sigma_2: B \rightarrow \cE_2$ be rational sections of two elliptic schemes such that at least one of them is non-torsion. Suppose that there exists an isogeny $\phi:\cE_1 \rightarrow \cE_2$. Let us consider the abelian scheme $\pi: \cA:=\cE_1\times_B\cE_2 \rightarrow B$ endowed with the (non-torsion) section $\sigma=(\sigma_1,\sigma_2)$. We have the following situation:
		\begin{enumerate}
			\item if $\phi\circ\sigma_1, \sigma_2$ are linearly dependent over $\bZ$, the cover $B_\sigma\rightarrow B^*$ has infinite degree and its Galois group is isomorphic to $\bZ^2$;
			
			\item if $\phi\circ\sigma_1, \sigma_2$ are linearly independent over $\bZ$, the cover $B_\sigma\rightarrow B^*$ has infinite degree and its Galois group is isomorphic to $\bZ^4$.
		\end{enumerate}
	\end{thm}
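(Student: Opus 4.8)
The plan is to reduce the statement to the case of a self-product $\cE_2 \times_B \cE_2$, which is already settled by Theorem \ref{squareProduct}, by transporting the problem along an isogeny and invoking the invariance established in Theorem \ref{isogenyInvariance}. Concretely, the isogeny $\phi:\cE_1 \rightarrow \cE_2$ induces a morphism of abelian schemes
$$\phi_\cA := (\phi,\textrm{id}_{\cE_2}): \cA = \cE_1 \times_B \cE_2 \longrightarrow \cA' := \cE_2 \times_B \cE_2.$$
First I would check that $\phi_\cA$ is an isogeny in the sense of the definition above: on each fiber it acts as $\phi_b \times \textrm{id}_{(\cE_2)_b}$, which is surjective with kernel $\ker(\phi_b)\times\{0\}$ a finite group scheme, and flatness and finiteness of $\ker\phi_\cA \rightarrow B$ follow from the corresponding properties of $\ker\phi \rightarrow B$. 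Under $\phi_\cA$ the section $\sigma=(\sigma_1,\sigma_2)$ is carried to
$$\sigma' := \phi_\cA\circ\sigma = (\phi\circ\sigma_1,\sigma_2),$$
a section of $\cA' \rightarrow B$.

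Next I would apply Theorem \ref{isogenyInvariance} with $f=\phi_\cA$. The key point is that its proof yields more than the bare transfer of Conjecture \ref{conj}: it shows that the cover $B^*$ trivializing periods is the same for $\cA$ and $\cA'$, and that the monodromy lattices of $\log_\sigma$ and $\log_{\sigma'}$ are related by the fixed rational matrix $\zeta^T\in\textrm{GL}_4(\bQ)$, whence $B_\sigma=B_{\sigma'}$ as covers of $B^*$. In particular the Galois group of $B_\sigma \rightarrow B^*$ is isomorphic to that of $B_{\sigma'}\rightarrow B^*$, so the full dichotomy ($\bZ^2$ versus $\bZ^4$) transfers along $\phi_\cA$, not merely the weaker assertion that the group is some $\bZ^{2g}$.

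It then remains to apply Theorem \ref{squareProduct} to $\cA'=\cE_2\times_B\cE_2$ endowed with the section $\sigma'=(\phi\circ\sigma_1,\sigma_2)$. Since $\phi$ is an isogeny, $\phi\circ\sigma_1$ is non-torsion exactly when $\sigma_1$ is (by the Proposition preceding Theorem \ref{isogenyInvariance}), so at least one of $\phi\circ\sigma_1,\sigma_2$ is non-torsion and the hypotheses of Theorem \ref{squareProduct} are met. Moreover $\phi\circ\sigma_1$ and $\sigma_2$ are linearly dependent (resp. independent) over $\bZ$ precisely in the two cases of the present statement, and Theorem \ref{squareProduct} then gives a Galois group isomorphic to $\bZ^2$ (resp. $\bZ^4$). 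Combined with the identification of covers from the previous step, this yields the claim for $B_\sigma \rightarrow B^*$.

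The main obstacle is not computational but conceptual, and lies entirely in the second step: one must be certain that the proof of Theorem \ref{isogenyInvariance} really delivers the equality of covers $B_\sigma=B_{\sigma'}$ (equivalently, the explicit relation $w'=\zeta^T w$ between the monodromy vectors of the two logarithms), since it is this refined statement — and not the coarser logical implication between instances of Conjecture \ref{conj} — that is needed to propagate the exact rank ($2$ or $4$) of the relative monodromy group through the isogeny.
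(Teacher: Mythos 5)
Your proposal is correct and takes essentially the same route as the paper: the paper likewise reduces to the self-product case via the induced isogeny $\phi_\cA=(\phi,\textrm{id}_{\cE_2}):\cE_1\times_B\cE_2\rightarrow\cE_2\times_B\cE_2$ and then deduces Theorem \ref{isoProduct} from Theorem \ref{squareProduct} by Theorem \ref{isogenyInvariance}. Your closing concern is exactly the right point and is unproblematic: the proof of Theorem \ref{isogenyInvariance} does establish the refined facts $B_1^*=B_2^*$ and $B_\sigma=B_{\sigma'}$ (via the relation between monodromy vectors given by $\zeta^T$), which is what the paper implicitly invokes to transfer the full $\bZ^2$ versus $\bZ^4$ dichotomy rather than only the statement of Conjecture \ref{conj}.
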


	\begin{exe}\label{isoExa}
		Let us consider the following two algebraic sections of the Legendre scheme:
		$$\sigma_1(\lambda)=\left( 2, \sqrt{2(2-\lambda)} \right), \qquad \sigma_2(\lambda)=\left( \lambda+1, \sqrt{\lambda(\lambda+1)} \right).$$
		The base $B$ on which the two sections become well-defined may be taken as the (ramified) cover of $\bP_1-\{0,1,\infty\}$ defined by taking the square roots of $2-\lambda$ and of $\lambda(\lambda+1)$. This cover has degree $4$ and is ramified above $\lambda=2$ and $\lambda=-1$. Let us define the elliptic scheme $\cE \rightarrow B$ obtained extending the Legendre scheme by base change to $B$ and consider the abelian family $\cA:=\cE\times_B\cE \rightarrow B$ (observe that the abelian family $\cA$ is obtained as the fiber square of the Legendre scheme, extended by base change to $B$). The above two sections give a rational section $\sigma:B \rightarrow \cA$, whose components we continue to denote by $\sigma_1, \sigma_2$.
		
		Note that none of the sections is identically torsion: in fact, it is known that every torsion section can be defined over a base which is an unramified cover of $\bP_1-\{0,1,\infty\}$, whereas to define $\sigma_1$ (resp. $\sigma_2$) we need the base $B$ to be ramified (at least) above the point $2$ (resp. $-1$) of $\bP_1-\{0,1,\infty\}$. Moreover, the fact that the minimal ramification necessary to define $\sigma_1, \sigma_2$ is different for the two sections, implies that they are linearly independent over $\bZ$. To prove this assertion, let us look at the monodromy action on a possible relation $n_1\sigma_1 + n_2\sigma_2=0$ and observe that for these sections a different choice of the square root would merely change sign to the section. Thus, if we look at the monodromy action induced by a ``small loop'' turning around $2$ in $\bP_1-\{0,1,\infty\}$ on the dependence relation, we change sign to $\sigma_1$ but leave unchanged $\sigma_2$ and so obtain $n_1=0$. Analogously, we obtain $n_2=0$.
		
		Theorem $\ref{isoProduct}$ yields that the relative monodromy group of the logarithm of $\sigma$ with respect to periods is isomorphic to $\bZ^4$. Moreover, we can say something about an explicit loop which leaves unchanged periods but not logarithm. In \cite{Tro}, we have constructed such a loop $\Gamma$ for the logarithm of $\sigma_1$. This loop $\Gamma$ is one of the loops we are looking for: in fact it obviously also works for the logarithm of $\sigma$ with respect to the periods of the abelian scheme, since the periods of $\cA \rightarrow B$ are determined by the periods of $\cE\rightarrow B$.
	\end{exe}

\subsection{Case 2: product of non-isogenous elliptic schemes}

	In this last section, we now formulate a result on the monodromy of the logarithm of a section $\sigma:B \rightarrow \cA=\cE_1 \times_B \cE_2$, in the case in which $\cE_1, \cE_2$ are not isogenous.
	
	In what follows we will make the following identifications: if $g \in \ker{\rho_{\cE_i}}$ we identify $\theta_{\sigma_i}(g) \equiv w_{i,g} \in \bZ^2$; if $ g \in \ker{\rho_\cA}$ we identify $\theta_\sigma(g) \equiv w_g\in \bZ^4$. Moreover, we denote by $\omega_{1,\cE_i}, \omega_{2,\cE_i}$ the periods of $\cE_i \rightarrow B$.
	 
	Recall that unlike the case in which $\cE_1, \cE_2$ are isogenous schemes, in this case the representations $\rho_{\cE_1}$ and $\rho_{\cE_2}$ are not conjugate. Rather, the image $\rho_\cA(\pi_1(B))$ is Zariski-dense in $\textrm{SL}_2(\bZ)\times \textrm{SL}_2(\bZ)$ (see Theorem $\ref{isogenyThm}$).
	 
	Now, we are ready for the results of this section.
	 
	\begin{lemma}\label{NoCyclicGroup}
		Let $H \unlhd G:=\pi_1(B)$ be a normal subgroup of $\pi_1(B)$. If $H \subset \ker{\rho_{\cE_1}}$ (resp. $H \subset \ker{\rho_{\cE_2}}$), then $\theta_{\sigma_1}(H)$ (resp. $\theta_{\sigma_2}(H)$) is isomorphic to either $\{0\}$ or $\bZ^2$.
	\end{lemma}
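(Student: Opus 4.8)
The plan is to adapt, to an arbitrary normal subgroup $H \unlhd G$ contained in $\ker \rho_{\cE_1}$, the argument already used for the group $H_1$ in the isogenous case; I treat $\cE_1$ only, since the statement for $\cE_2$ follows verbatim by swapping indices. The first step is purely formal. Since $H \subset \ker\rho_{\cE_1}$, for each $h \in H$ the matrix $\theta_{\sigma_1}(h)$ is unipotent of the form $\left(\begin{matrix} \textrm{id}_2 & w_{1,h}\\ 0 & 1\end{matrix}\right)$, so under the identification $\theta_{\sigma_1}(h) \equiv w_{1,h}$ the image $\theta_{\sigma_1}(H)$ is a subgroup of $\bZ^2$. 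Hence it is isomorphic to $\{0\}$, $\bZ$ or $\bZ^2$, and the entire content of the lemma is to exclude the rank-one case.

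Next I would argue by contradiction, assuming $\theta_{\sigma_1}(H) \isom \bZ$, so that all the vectors $w_{1,h}$ with $h \in H$ lie on a single rational line $\ell = \bQ v \subset \bQ^2$ with $v \neq 0$. The decisive computation is the behaviour of $w_{1,h}$ under conjugation: from the cocycle rule $w_{1,ab} = w_{1,a} + \rho_{\cE_1}(a)\,w_{1,b}$, together with $\rho_{\cE_1}(h) = \textrm{id}$ (as $h \in \ker\rho_{\cE_1}$) and the identity $w_{1,g^{-1}} = -\rho_{\cE_1}(g^{-1})\,w_{1,g}$ coming from $w_{1,e}=0$, I obtain $w_{1, g^{-1}hg} = \rho_{\cE_1}(g^{-1})\,w_{1,h}$ for every $g \in G$ and $h \in H$. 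Because $H$ is normal, $g^{-1}hg \in H$, and therefore $\rho_{\cE_1}(g^{-1})\,w_{1,h} \in \theta_{\sigma_1}(H) \subset \ell$ for all $g$.

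Now I would fix some $h \in H$ with $w_{1,h} \neq 0$, possible since $\theta_{\sigma_1}(H)$ is assumed infinite cyclic. The relation above then forces $\rho_{\cE_1}(g^{-1})\,w_{1,h} \in \ell = \bQ\,w_{1,h}$ for every $g \in G$; as $g \mapsto g^{-1}$ is a bijection of $G$, this says equivalently that $w_{1,h}$ spans a line invariant under the whole image $G_1 = \rho_{\cE_1}(G)$. But $G_1$ has finite index in $\Gamma_2 \subset \textrm{SL}_2(\bZ)$, hence is Zariski-dense in $\textrm{SL}_2$ and so acts irreducibly on the period lattice, exactly as recorded for $\cE_2$ in the isogenous case. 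An irreducible action admits no invariant line, a contradiction; this rules out $\theta_{\sigma_1}(H)\isom\bZ$ and proves the lemma.

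The only step requiring genuine input, rather than bookkeeping, is the appeal to irreducibility: this is where non-isotriviality of $\cE_1$ enters, guaranteeing that $G_1$ is of finite index in $\textrm{SL}_2(\bZ)$ and hence has no common eigenvector. The conjugation identity is the other place to be careful, since obtaining the correct $\rho_{\cE_1}(g^{-1})$ is precisely what allows the normality of $H$ to be exploited; reassuringly, as $g$ ranges over $G$ so does $g^{-1}$, so the distinction does not affect the final conclusion.
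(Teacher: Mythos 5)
Your proof is correct and follows essentially the same strategy as the paper's: assume the image is infinite cyclic, conjugate an element of $H$ by an arbitrary $g \in G$ (using normality) to get $w_{1,g^{-1}hg} = \rho_{\cE_1}(g^{-1})\,w_{1,h}$, and contradict the irreducibility of the monodromy action coming from Zariski-density of $G_1$ in $\textrm{SL}_2$. The only difference is presentational — you derive the key identity from the cocycle rule rather than by tracking analytic continuations of $\log_{\sigma_1}$, and you phrase the contradiction as the non-existence of an invariant line rather than of a common eigenvector — but the mathematical content is identical.
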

	
		\begin{proof}
	We will give the proof only for the case $H \subset \ker{\rho_{\cE_1}}$, since the other case is analogous. Since we are going to work only with the scheme $\cE_1 \rightarrow B$, let us denote the periods $\omega_{1,\cE_1}, \omega_{2,\cE_1}$ simply by $\omega_1, \omega_2$ in this lemma.
	
		At first, since $H \subset \ker{\rho_{\cE_1}}$ observe that $\theta_{\sigma_1}(H)$ is a subgroup of $\bZ^2$; so it is either isomorphic to $\{0\}$ or $\bZ$ or $\bZ^2$. We want to prove that the case $\bZ$ is excluded. Suppose by contradiction that $\theta_{\sigma_1}(H)$ is infinite cyclic: this means that for every $h \in H$, the logarithm $\log_{\sigma_1}$ of $\sigma_1$ is transformed by $h$ as $$\log_{\sigma_1} \xmapsto{h} \log_{\sigma_1} + \chi(h)\omega_{\sigma_1},$$ for a fixed non-zero period $\omega_{\sigma_1}$ and a homomorphism $\chi:H \rightarrow \bZ$. In particular, let us choose $h$ such that $\chi(h)=1$. Recall that, for $g \in G=\pi_1(B)$ the logarithm $\log_{\sigma_1}$ will be sent by $g$ to a new determination of the form $$\log_{\sigma_1} + u_{1,g}\omega_1 + u_{2,g} \omega_2,$$ where $u_{1,g}, u_{2,g}$ are integers. Recall that the group $G_1=\rho_{\cE_1}(G)$ acts irreducibly on the lattice of periods, since it is Zariski-dense in $\textrm{SL}_2(\bZ)$. Then there exists $g \in G$ such that $\omega_{\sigma_1}$ is not an eigenvector of $\rho_{\cE_1}(g)$. Since $H \unlhd G$, we have $h'=g^{-1}hg \in H$, where $g, h$ are the ones just considered. Let us calculate the action of the element $h'=g^{-1}hg$. We have
		\begin{align*}
			\log_{\sigma_1} &\xmapsto{g} \log_{\sigma_1} + u_{1,g} \omega_1 + u_{2,g} \omega_2 \xmapsto{h} \log_{\sigma_1} + u_{1,g}\omega_1 + u_{2,g} \omega_2 + \omega_{\sigma_1}\\
			& \xmapsto{g^{-1}} \log_{\sigma_1} +\rho_{\cE_2}(g^{-1})\omega_{\sigma_1}.
		\end{align*}
		In other words this means $\rho_{\cE_1}(g^{-1})\omega_{\sigma_1}=\chi(h')\omega_{\sigma_1}$, but this is a contradiction since $\omega_{\sigma_1}$ is not an eigenvector of $\rho_{\cE_1}(g)$ (nor of $\rho_{\cE_1}(g^{-1})$). This concludes the proof.
	\end{proof}

Now, suppose that both $\sigma_1$ and $\sigma_2$ are non-torsion and let's take a look at the difference with the case where the two schemes are isogenous. In both cases, we can use Theorem $\ref{mainTheorem}$ for $\sigma_1, \sigma_2$ and obtain that
$$\theta_{\sigma_1}(\ker{\rho_{\cE_1}})\isom \theta_{\sigma_2}(\ker{\rho_{\cE_2}}) \isom \bZ^2.$$ 
The problem is that when the schemes are isogenous, we have 
$$\ker{\rho_{\cE_1}}=\ker{\rho_{\cE_2}}=\ker{\rho_\cA}$$ (since the monodromy groups of periods are conjugate). Thus we deduce immediately that $\theta_\sigma(\ker{\rho_\cA}) \neq 0$.

Instead, when the two schemes are not isogenous, the group $\ker{\rho_\cA}=\ker{\rho_{\cE_1}} \cap \ker{\rho_{\cE_2}}$ can be smaller than $\ker{\rho_{\cE_1}}$ and $\ker{\rho_{\cE_2}}$. Therefore, this time Theorem $\ref{mainTheorem}$ does not allow us to conclude directly that $\theta_\sigma(\ker{\rho_\cA})\neq 0$. However, the conclusion is still true and we prove it (and a little more) in the next proposition.

\begin{prop}\label{nonTrivial}
	If $\sigma_1, \sigma_2$ are both non-torsion, then $\theta_\sigma(\ker{\rho_\cA})\neq 0$. Moreover, we have
	$$\theta_{\sigma_1}(\ker{\rho_\cA})\isom \bZ^2, \qquad \theta_{\sigma_2}(\ker{\rho_\cA})\isom \bZ^2.$$
\end{prop}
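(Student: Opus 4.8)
The plan is to prove the stronger ``moreover'' assertion, namely $\theta_{\sigma_1}(\ker\rho_\cA)\isom\bZ^2$ (and, symmetrically, $\theta_{\sigma_2}(\ker\rho_\cA)\isom\bZ^2$), from which $\theta_\sigma(\ker\rho_\cA)\neq 0$ is immediate: the first two coordinates of $\theta_\sigma(g)$ are exactly $\theta_{\sigma_1}(g)$, so $\theta_{\sigma_1}(\ker\rho_\cA)$ is the projection of $\theta_\sigma(\ker\rho_\cA)$ onto the first two coordinates, and a nonzero projection forces a nonzero group. Since $\ker\rho_\cA\unlhd G=\pi_1(B)$ is normal and contained in $\ker\rho_{\cE_1}$, Lemma $\ref{NoCyclicGroup}$ already guarantees that $\theta_{\sigma_1}(\ker\rho_\cA)$ is isomorphic to $\{0\}$ or $\bZ^2$. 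Hence the entire problem reduces to excluding the trivial case, i.e. to producing a single element of $\ker\rho_\cA$ that moves $\log_{\sigma_1}$.

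To build such an element I would use a commutator. By Theorem $\ref{mainTheorem}$ we have $\theta_{\sigma_1}(\ker\rho_{\cE_1})\isom\bZ^2$, so fix $h\in\ker\rho_{\cE_1}$ with $w_{1,h}:=\theta_{\sigma_1}(h)\neq 0$. For an arbitrary $g\in\ker\rho_{\cE_2}$ consider $[g,h]=ghg^{-1}h^{-1}$. Using the cocycle relation $w_{1,xy}=w_{1,x}+\rho_{\cE_1}(x)w_{1,y}$ together with $\rho_{\cE_1}(h)=\textrm{id}$ and $\rho_{\cE_2}(g)=\textrm{id}$, a short computation shows on one hand that $[g,h]\in\ker\rho_{\cE_1}\cap\ker\rho_{\cE_2}=\ker\rho_\cA$, and on the other hand that
$$\theta_{\sigma_1}([g,h])=(\rho_{\cE_1}(g)-\textrm{id})\,w_{1,h}.$$
It therefore suffices to find $g\in\ker\rho_{\cE_2}$ such that $w_{1,h}$ is not fixed by $\rho_{\cE_1}(g)$.

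The crux, and the step I expect to be the main obstacle, is exactly guaranteeing such a $g$, i.e. showing that $\rho_{\cE_1}(\ker\rho_{\cE_2})$ admits no nonzero common fixed vector in $\bZ^2$; this is where the non-isogeny hypothesis is used in an essential way. I would prove the sharper statement that $\rho_{\cE_1}(\ker\rho_{\cE_2})$ is Zariski-dense in $\textrm{SL}_2$. Since $\ker\rho_{\cE_2}\unlhd G$, its image $\rho_{\cE_1}(\ker\rho_{\cE_2})$ is normal in $\rho_{\cE_1}(G)=G_1$, whose Zariski closure is all of $\textrm{SL}_2$; hence the Zariski closure $H_1$ of $\rho_{\cE_1}(\ker\rho_{\cE_2})$ is a normal algebraic subgroup of $\textrm{SL}_2$, so it is either finite (central) or the whole of $\textrm{SL}_2$. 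Were $H_1$ finite, the exact sequence $1\to\rho_{\cE_1}(\ker\rho_{\cE_2})\to\rho_\cA(G)\to G_2\to 1$ would force the Zariski closure of $\rho_\cA(G)$ to have dimension at most $\dim\textrm{SL}_2=3$, contradicting Theorem $\ref{isogenyThm}$, which in the non-isogenous case makes $\rho_\cA(G)$ Zariski-dense in the $6$-dimensional group $\textrm{SL}_2\times\textrm{SL}_2$. Thus $H_1=\textrm{SL}_2$, and since the standard representation of $\textrm{SL}_2$ has no nonzero invariant vector, the nonzero $w_{1,h}$ cannot be fixed by all of $\rho_{\cE_1}(\ker\rho_{\cE_2})$.

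Combining the pieces yields $[g,h]\in\ker\rho_\cA$ with $\theta_{\sigma_1}([g,h])=(\rho_{\cE_1}(g)-\textrm{id})w_{1,h}\neq 0$, so $\theta_{\sigma_1}(\ker\rho_\cA)\neq\{0\}$, and Lemma $\ref{NoCyclicGroup}$ upgrades this to $\theta_{\sigma_1}(\ker\rho_\cA)\isom\bZ^2$. Exchanging the roles of the two factors gives $\theta_{\sigma_2}(\ker\rho_\cA)\isom\bZ^2$, and $\theta_\sigma(\ker\rho_\cA)\neq 0$ follows at once. Beyond the routine cocycle bookkeeping and the already-available Lemma $\ref{NoCyclicGroup}$, the only genuinely delicate point is the Zariski-density step, which is precisely where the non-isogeny of $\cE_1$ and $\cE_2$ is indispensable.
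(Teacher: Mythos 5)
Your overall skeleton is exactly the paper's: you reduce to showing $\theta_{\sigma_1}(\ker\rho_\cA)\neq 0$, produce elements of $\ker\rho_\cA$ as commutators of $h\in\ker\rho_{\cE_1}$ and $g\in\ker\rho_{\cE_2}$, compute $\theta_{\sigma_1}([g,h])=(\rho_{\cE_1}(g)-\textrm{id})\,w_{1,h}$ (this cocycle computation is correct and coincides with the matrix identity in the paper's proof), and then invoke Lemma $\ref{NoCyclicGroup}$ to upgrade nontriviality to $\bZ^2$. The only point where you diverge from the paper is how to guarantee a choice of $g\in\ker\rho_{\cE_2}$ with $\rho_{\cE_1}(g)w_{1,h}\neq w_{1,h}$: the paper extracts this from the two rank statements $\theta_{\sigma_1}(\ker\rho_{\cE_1})\isom\bZ^2$, $\theta_{\sigma_2}(\ker\rho_{\cE_2})\isom\bZ^2$ (if every choice failed, $\rho_{\cE_1}(\ker\rho_{\cE_2})$ would fix a rank-two, hence spanning, set of vectors, i.e. $\ker\rho_{\cE_2}\subseteq\ker\rho_{\cE_1}$), whereas you try to prove the much stronger claim that $\rho_{\cE_1}(\ker\rho_{\cE_2})$ is Zariski-dense in $\textrm{SL}_2$.

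That is where your proof has a genuine gap. The normality dichotomy is fine ($H_1:=\overline{\rho_{\cE_1}(\ker\rho_{\cE_2})}$ is normal in $\textrm{SL}_2$, hence central-finite or everything), but the implication ``were $H_1$ finite, the exact sequence $1\to\rho_{\cE_1}(\ker\rho_{\cE_2})\to\rho_\cA(G)\to G_2\to 1$ would force $\dim\overline{\rho_\cA(G)}\le 3$'' is false: Zariski closure does not respect extensions of abstract groups. What your dimension count actually requires is that the kernel of the second projection restricted to $\overline{\rho_\cA(G)}$ equal $H_1\times\{\textrm{id}\}$, but in general $\overline{\rho_\cA(G)}\cap(\textrm{SL}_2\times\{\textrm{id}\})$ can be strictly larger than $\overline{\rho_{\cE_1}(\ker\rho_{\cE_2})}\times\{\textrm{id}\}$. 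Concretely, let $\Gamma=\{(g,g^\tau):g\in\textrm{SL}_2(\bZ[\sqrt{2}])\}$ be the graph of the Galois conjugation $\tau$; by the Borel density theorem (this is an irreducible lattice, the Hilbert modular group, in $\textrm{SL}_2(\bR)\times\textrm{SL}_2(\bR)$) it is Zariski-dense in $\textrm{SL}_2\times\textrm{SL}_2$, yet both projections are injective on $\Gamma$, so the analogue of $\rho_{\cE_1}(\ker\rho_{\cE_2})$ is trivial while the closure has dimension $6$, not $\le 3$. Thus Zariski density of $\rho_\cA(\pi_1(B))$ (the non-isogenous conclusion of Theorem $\ref{isogenyThm}$) is, by itself, logically compatible with $H_1$ finite, and your contradiction never materializes. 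To repair the argument you need an independent reason excluding $\rho_{\cE_1}(\ker\rho_{\cE_2})\subseteq\{\pm\,\textrm{id}\}$ (equivalently, excluding that $\rho_{\cE_1}$ essentially factors through $\rho_{\cE_2}$); a dimension count on Zariski closures cannot supply it, and this exclusion — which the paper passes over rather quickly as well — is the actual content of the step.
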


\begin{proof}
	By Theorem $\ref{mainTheorem}$ in the case of $\cE_1 \rightarrow B$ and $\cE_2 \rightarrow B$, we have that there exist two elements $g_1, g_2 \in \pi_1(B)$ such that
	
$$\theta_{\sigma_1}(g_1)=\left(
\begin{matrix}
I_2 & \begin{matrix} u_{1,g_1} \\ u_{2,g_1} \end{matrix}\\
0 & 1
\end{matrix}
\right),
\qquad
\theta_{\sigma_2}(g_2)=\left(
\begin{matrix}
I_2 & \begin{matrix} v_{1,g_2} \\ v_{2,g_2} \end{matrix}\\
0 & 1
\end{matrix}
\right),$$

where $\left(\begin{matrix} u_{1,g_1} \\ u_{2,g_1} \end{matrix}\right)$ and $\left(\begin{matrix} v_{1,g_2} \\ v_{2,g_2} \end{matrix}\right)$ are non-zero vectors. By looking at the representation $\theta_{\sigma}$ we obtain the following two matrices:

$$\theta_\sigma(g_1)=\left(
\begin{matrix}
\begin{matrix} I_2 & 0\\
0 & \rho_2(g_1)\end{matrix} & \begin{matrix} u_{1,g_1} \\ u_{2,g_1} \\ v_{1,g_1} \\ v_{2,g_1} \end{matrix}\\
0 & 1
\end{matrix}
\right),
\qquad
\theta_\sigma(g_2)=\left(
\begin{matrix}
\begin{matrix} \rho_1(g_2) & 0\\
0 & I_2 \end{matrix} & \begin{matrix} u_{1,g_2} \\ u_{2,g_2} \\ v_{1,g_2} \\ v_{2,g_2} \end{matrix}\\
0 & 1
\end{matrix}
\right).$$

If we compute $\theta_\sigma(g_1g_2g_1^{-1}g_2^{-1})$ we obtain
$$\theta_\sigma(g_1g_2g_1^{-1}g_2^{-1})=\left(
\begin{matrix}
I_2 & 0 & \left(\begin{matrix} u_{1,g_1} \\ u_{2,g_1}\end{matrix}\right) - \rho_{\cE_1}(g_2) \left(\begin{matrix} u_{1,g_1} \\ u_{2,g_1}\end{matrix}\right) \\ 
0 & I_2 & \rho_{\cE_2}(g_1) \left(\begin{matrix} v_{1,g_2} \\ v_{2,g_2}\end{matrix}\right) - \left(\begin{matrix} v_{1,g_2} \\ v_{2,g_2}\end{matrix}\right)\\
0 & 0 & 1
\end{matrix}
\right).$$
In particular, this proves that $g_1g_2g_1^{-1}g_2^{-1} \in \ker{\rho_\cA}$ for each $g_1 \in \ker{\rho_{\cE_1}}, g_2 \in \ker{\rho_{\cE_2}}$.

Since $\theta_{\sigma_1}(\ker{\rho_{\cE_1}}) \isom \bZ^2$ and $\theta_{\sigma_2}(\ker{\rho_{\cE_2}}) \isom \bZ^2$, we can choose $g_1\in \ker{\rho_{\cE_1}}, g_2 \in \ker{\rho_{\cE_2}}$ in such a way that
$$\left(\begin{matrix} u_{1,g_1} \\ u_{2,g_1}\end{matrix}\right) - \rho_{\cE_1}(g_2) \left(\begin{matrix} u_{1,g_1} \\ u_{2,g_1}\end{matrix}\right) \neq 0.$$
Then, we can also choose other $g_1, g_2$ such that
$$\rho_{\cE_2}(g_1) \left(\begin{matrix} v_{1,g_2} \\ v_{2,g_2}\end{matrix}\right) - \left(\begin{matrix} v_{1,g_2} \\ v_{2,g_2}\end{matrix}\right) \neq 0.$$

Since $g_1g_2g_1^{-1}g_2^{-1} \in \ker{\rho_\cA}$ for each $g_1 \in \ker{\rho_{\cE_1}}, g_2 \in \ker{\rho_{\cE_2}}$, we conclude that
$$\theta_\sigma(\ker{\rho_\cA}), \theta_{\sigma_1}(\ker{\rho_\cA}), \theta_{\sigma_2}(\ker{\rho_\cA}) \neq 0.$$
In particular, by Lemma $\ref{NoCyclicGroup}$ we have
$$\theta_{\sigma_1}(\ker{\rho_\cA}) \isom \bZ^2, \qquad \theta_{\sigma_2}(\ker{\rho_\cA})\isom \bZ^2.$$
\end{proof}

Let us define
	 $$H_1 := \theta_{\sigma_2}(\ker{\theta_{\sigma_1}} \cap \ker{\rho_{\cA}}), \quad H_2 := \theta_{\sigma_1}(\ker{\theta_{\sigma_2}} \cap \ker{\rho_{\cA}}).$$
	 Since $\ker{\theta_{\sigma_1}} \cap \ker{\rho_{\cA}}$ is a normal subgroup of $G$ which is contained in $\ker{\rho_{\cE_2}}$, by Lemma $\ref{NoCyclicGroup}$ we have that $H_1$ is isomorphic either to $\{0\}$ or to $\bZ^2$. The same is true for $H_2$.
	
	\begin{prop}\label{possibleCases}
		If both $\sigma_1, \sigma_2$ are non-torsion, then the group $\theta_\sigma(\ker{\rho_\cA})$ is isomorphic to either $\bZ^2$ or $\bZ^4$.
	\end{prop}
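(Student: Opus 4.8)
The plan is to mirror the structure used in the isogenous case (Proposition $\ref{possibleCasesIso}$): I would exhibit $K:=\theta_\sigma(\ker{\rho_\cA})$ as an extension of $\theta_{\sigma_1}(\ker{\rho_\cA})$ by the kernel of a coordinate projection, and then read off the possible ranks from the inputs already in hand. First I would set $K:=\theta_\sigma(\ker{\rho_\cA})\subset\bZ^4$; being a subgroup of a free abelian group it is free, and by Proposition $\ref{nonTrivial}$ it is non-trivial. Then I would introduce the projection onto the first two coordinates
$$p_1:K\to\bZ^2,\qquad w_g=(u_{1,g},u_{2,g},v_{1,g},v_{2,g})^t\longmapsto(u_{1,g},u_{2,g})^t,$$
whose image is exactly $K_1:=\theta_{\sigma_1}(\ker{\rho_\cA})$; the second assertion of Proposition $\ref{nonTrivial}$ gives $K_1\isom\bZ^2$.

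Next I would identify $\ker p_1$. An element of $K$ lies in $\ker p_1$ precisely when it arises from some $g\in\ker{\rho_\cA}$ with $u_{1,g}=u_{2,g}=0$, that is $g\in\ker{\theta_{\sigma_1}}\cap\ker{\rho_\cA}$, and for such $g$ the vector $w_g=(0,0,v_{1,g},v_{2,g})^t$ is determined by $\theta_{\sigma_2}(g)$. Hence $\ker p_1\isom H_1=\theta_{\sigma_2}(\ker{\theta_{\sigma_1}}\cap\ker{\rho_\cA})$. Since $\ker{\theta_{\sigma_1}}\cap\ker{\rho_\cA}$ is a normal subgroup of $G$ contained in $\ker{\rho_{\cE_2}}$, Lemma $\ref{NoCyclicGroup}$ applies and forces $H_1\isom\{0\}$ or $H_1\isom\bZ^2$; this is exactly the step that rules out the intermediate (rank-one) possibility for the kernel.

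Finally I would assemble these data into the short exact sequence
$$0\to\ker p_1\to K\xrightarrow{\;p_1\;}K_1\to 0.$$
Because $K_1\isom\bZ^2$ is free, the sequence splits and $K\isom\ker p_1\oplus K_1\isom H_1\oplus\bZ^2$; equivalently, on ranks, $\operatorname{rank}K=\operatorname{rank}H_1+\operatorname{rank}K_1\in\{0+2,\,2+2\}=\{2,4\}$. Thus $K\isom\bZ^2$ when $H_1\isom\{0\}$ and $K\isom\bZ^4$ when $H_1\isom\bZ^2$, which is the claim. I do not expect a serious obstacle at this stage: all the analytic and irreducibility content has already been absorbed into Lemma $\ref{NoCyclicGroup}$ (which prevents $H_1$ from being infinite cyclic) and into Proposition $\ref{nonTrivial}$ (which pins $K_1$ to rank $2$). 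The only point requiring care is the clean identification of $\ker p_1$ with $H_1$, since it is precisely this identification, combined with the exclusion of the rank-one case for $H_1$, that guarantees the odd ranks $1$ and $3$ cannot occur.
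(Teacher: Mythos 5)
Your proof is correct and rests on exactly the same two inputs as the paper's own argument: Proposition~$\ref{nonTrivial}$ (which pins $K_1=\theta_{\sigma_1}(\ker{\rho_\cA})$ to $\bZ^2$) and Lemma~$\ref{NoCyclicGroup}$ applied to the normal subgroup $\ker{\theta_{\sigma_1}}\cap\ker{\rho_\cA}$ (which forces $H_1\isom\{0\}$ or $H_1\isom\bZ^2$), followed by the same dichotomy on $H_1$. The only difference is in the assembly: where you identify $\ker p_1$ with $H_1$ and split the exact sequence $0\to H_1\to K\to K_1\to 0$ over the free quotient $K_1$, the paper argues by hand --- a preliminary claim that $\textrm{rank}\,K\ge 2$, then explicit element computations showing any three elements of $K$ are $\bZ$-dependent when $H_1\isom\{0\}$, and exhibiting four independent elements when $H_1\isom\bZ^2$ --- so your splitting argument is a cleaner, uniform packaging of the same proof that in particular makes the paper's separate rank-two claim unnecessary.
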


	\begin{proof}
	
	\textbf{Claim: } $\theta_\sigma(\ker{\rho_\cA}) \neq \bZ$.\\
	
	\textbf{Proof of the Claim: } 
	
	Let us suppose $\textrm{rank }{\theta_\sigma(\ker{\rho_\cA})} <2$. Observe that 
	\begin{align*}
		\textrm{rank }{\theta_{\sigma_1}(\ker{\rho_\cA})} \le \textrm{rank }{\theta_\sigma(\ker{\rho_\cA})},\\
		\textrm{rank }{\theta_{\sigma_2}(\ker{\rho_\cA})} \le \textrm{rank }{\theta_\sigma(\ker{\rho_\cA})}.
	\end{align*}
	
	Thus, by Lemma $\ref{NoCyclicGroup}$, this implies that
	$$\theta_{\sigma_1}(\ker{\rho_\cA}) \isom \{0\}, \qquad \theta_{\sigma_2}(\ker{\rho_\cA})\isom\{0\},$$
	which is in contradiction with Proposition $\ref{nonTrivial}$.
	
	\textbf{Final proof}
		
		Since both the sections are non-torsion, by Proposition $\ref{nonTrivial}$ and by the Claim, we have that
		$$2 \le \textrm{rank}\, \theta_\sigma(\ker{\rho_\cA}) \le 4.$$ 
		By Lemma $\ref{NoCyclicGroup}$, we only have two possibilities for $H_1$, i.e. $H_1 \isom \{0\}$ or $H_1\isom \bZ^2$.\\
		
		\textbf{Case 1: $H_1\isom\{0\}$}\\
		
		The condition $H_1\isom\{0\}$ means that for each $g \in \ker{\rho_\cA}$ if $u_{1,g}=u_{2,g}=0$, then $v_{1,g}=v_{2,g}=0$, where the notation is the same as above. Let us prove that $\textrm{rank}\, \theta_\sigma(\ker{\rho_\cA})=2$ by proving that any three elements of the form $w_g, w_h, w_k \in \theta_\sigma(\ker{\rho_\cA})$ are linearly dependent on $\bZ$.
		
		Since $\theta_{\sigma_1}(\ker{\rho_{\cE_1}}) \isom \bZ^2$, given any three elements $g,h,k \in \ker{\rho_\cA} \subset \ker{\rho_{\cE_1}}$, there always exists $n_g, n_h, n_k \in \bZ$, not all zero, such that
		$$n_g\left(\begin{matrix}
			u_{1,g}\\
			u_{2,g}
		\end{matrix}\right) + n_h\left(\begin{matrix}
			u_{1,h}\\
			u_{2,h}
		\end{matrix}\right) + n_k\left(\begin{matrix}
			u_{1,k}\\
			u_{2,k}
		\end{matrix}\right)= \left(\begin{matrix}
			0\\
			0
		\end{matrix}\right).$$
		Thus we have
		$$\theta_\sigma(k^{n_k}h^{n_h}g^{n_g})=\left(\begin{matrix}
			\begin{matrix}
				\textrm{I}_4
			\end{matrix} & \begin{matrix}
			0\\
			0\\
			n_gv_{1,g} + n_hv_{1,h} +n_kv_{1,k}\\
			n_gv_{2,g} + n_hv_{2,h} +n_kv_{2,k}
		\end{matrix}\\
		0 & 1
		\end{matrix}\right).$$
		The condition $H_1\isom\{0\}$ implies that
		$$n_g\left(\begin{matrix}
			v_{1,g}\\
			v_{2,g}
		\end{matrix}\right) + n_h\left(\begin{matrix}
			v_{1,h}\\
			v_{2,h}
		\end{matrix}\right) + n_k\left(\begin{matrix}
			v_{1,k}\\
			v_{2,k}
		\end{matrix}\right) = \left(\begin{matrix}
			0\\
			0
		\end{matrix}\right).$$
		Therefore any three elements of $\bZ^4$ of the form $w_g, w_h, w_k$ are linearly dependent on $\bZ$, so $\theta_\sigma(\ker{\rho_\cA})\isom \bZ^2$.\\
		
		\textbf{Case 2: $H_1\isom \bZ^2$}\\
		Since $H_1\isom \bZ^2$ we can consider a $\bZ$-basis for it and the following corresponding elements of $\theta_\sigma(\ker{\rho_\cA})$:
		$$w_3=\left(\begin{matrix}
			0\\
			0\\
			v_{1,k}\\
			v_{2,k}
		\end{matrix}\right), w_4=\left(\begin{matrix}
			0\\
			0\\
			v_{1,l}\\
			v_{2,l}
		\end{matrix}\right)
		\qquad \textrm{where } k,l \in \ker{\theta_{\sigma_1}}\cap \ker{\rho_\cA}\subset\ker{\rho_\cA}.$$
		Since both sections are non-torsion, by Proposition $\ref{nonTrivial}$ we have that $\theta_{\sigma_1}(\ker{\rho_{\cA}})\isom \bZ^2$. Therefore, let us choose a $\bZ$-basis $\left(\begin{matrix}
			u_{1,g}\\
			u_{2,g}
		\end{matrix}\right), \left(\begin{matrix}
			u_{1,h}\\
			u_{2,h}
		\end{matrix}\right)$ of it, where $g,h \in \ker{\rho_\cA}$, and consider the corresponding elements of $\theta_\sigma(\ker{\rho_\cA})$:
		$$z_1=\left(\begin{matrix}
			u_{1,g}\\
			u_{2,g}\\
			v_{1,g}\\
			v_{2,g}
		\end{matrix}\right), z_2=\left(\begin{matrix}
			u_{1,h}\\
			u_{2,h}\\
			v_{1,h}\\
			v_{2,h}
		\end{matrix}\right).$$

		With an appropriate linear combination of $z_1, z_2, w_3, w_4$ we obtain that
		$$w_1=\left(\begin{matrix}
			u_{1,g}\\
			u_{2,g}\\
			0\\
			0
		\end{matrix}\right), w_2=\left(\begin{matrix}
			u_{1,h}\\
			u_{2,h}\\
			0\\
			0
		\end{matrix}\right)$$
		are elements of $\theta_\sigma(\ker{\rho_\cA})$. Moreover, $w_1, w_2, w_3, w_4$ are linearly independent over $\bZ$. Thus $\theta_\sigma(\ker{\rho_\cA}) \isom \bZ^4$.\\		
\end{proof}

\subsubsection{Main Theorem}
		
		Now, recall we are considering an abelian scheme $\cA:= \cE_1 \times_B \cE_2$, where $\cE_1$ and $\cE_2$ are not isogenous.
	
	\begin{thm}\label{nonIsoProduct}
		Let $\sigma_i: B \rightarrow \cE_i, i=1,2$ be rational sections of two non-isogenous elliptic schemes and suppose they are not both torsion sections. Let us consider the abelian scheme $\pi: \cA:=\cE_1\times_B\cE_2 \rightarrow B$ endowed with the (non-torsion) section $\sigma=(\sigma_1,\sigma_2)$. We have the following situation:
		\begin{enumerate}
			\item if one between $\sigma_1, \sigma_2$ is a torsion section, the cover $B_\sigma\rightarrow B^*$ has infinite degree and its Galois group is isomorphic to $\{0\}$ or to $\bZ^2$;
			
			\item if neither $\sigma_1$ nor $\sigma_2$ is a torsion section, the cover $B_\sigma\rightarrow B^*$ has infinite degree and its Galois group is isomorphic to $\bZ^4$.
		\end{enumerate}
	\end{thm}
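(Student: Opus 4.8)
The plan is to handle the two cases separately, in each case reducing the computation of the Galois group $\theta_\sigma(\ker\rho_\cA)$ of $B_\sigma\to B^*$ to the lemmas and propositions already established for the non-isogenous setting.

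For part (1) I would suppose without loss of generality that $\sigma_1$ is torsion and $\sigma_2$ non-torsion. By Remark~\ref{torsionSection} the function $\log_{\sigma_1}$ is a fixed rational combination of the periods $\omega_{1,\cE_1},\omega_{2,\cE_1}$, which are single-valued on $B^*$. Hence for every $g\in\ker\rho_\cA\subset\ker\rho_{\cE_1}$ the periods of $\cE_1$ are unchanged by analytic continuation, so $\log_{\sigma_1}$ is unchanged as well, i.e.\ $u_{1,g}=u_{2,g}=0$ for all $g\in\ker\rho_\cA$. Consequently every vector $w_g=\theta_\sigma(g)$ has the shape $(0,0,v_{1,g},v_{2,g})^t$, so projection onto the last two coordinates gives $\theta_\sigma(\ker\rho_\cA)\isom\theta_{\sigma_2}(\ker\rho_\cA)$. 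Since $\ker\rho_\cA$ is normal in $G=\pi_1(B)$ and contained in $\ker\rho_{\cE_2}$, Lemma~\ref{NoCyclicGroup} forces $\theta_{\sigma_2}(\ker\rho_\cA)$ to be isomorphic to $\{0\}$ or $\bZ^2$, which is exactly the assertion of part (1).

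For part (2) assume both $\sigma_1,\sigma_2$ are non-torsion. By Proposition~\ref{possibleCases} the group $K:=\theta_\sigma(\ker\rho_\cA)$ is isomorphic to $\bZ^2$ or $\bZ^4$, while by Proposition~\ref{nonTrivial} both $K_1:=\theta_{\sigma_1}(\ker\rho_\cA)$ and $K_2:=\theta_{\sigma_2}(\ker\rho_\cA)$ are isomorphic to $\bZ^2$. Suppose for contradiction that $K\isom\bZ^2$. Then the coordinate projections $p_1:K\to K_1$ and $p_2:K\to K_2$ are surjective homomorphisms between free abelian groups of equal rank, hence isomorphisms; arguing as in Claim~1 of the proof of Theorem~\ref{squareProduct} this produces a matrix $M\in\textrm{GL}_2(\bQ)$ with
$$M\left(\begin{matrix}u_{1,g}\\u_{2,g}\end{matrix}\right)=\left(\begin{matrix}v_{1,g}\\v_{2,g}\end{matrix}\right)\qquad\text{for all }g\in\ker\rho_\cA.$$

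The heart of the argument, and the step I expect to be the main obstacle, is to use the non-isogeny hypothesis to reach a contradiction, and here the non-isogenous case is actually cleaner than the isogenous one. Fix $h\in\ker\rho_\cA$ and an arbitrary $g\in\pi_1(B)$, and set $h':=ghg^{-1}\in\ker\rho_\cA$ (normality). As in Claim~2 of the proof of Theorem~\ref{squareProduct}, conjugation transports the two monodromy vectors by the respective period representations,
$$\left(\begin{matrix}u_{1,h'}\\u_{2,h'}\end{matrix}\right)=\rho_{\cE_1}(g)\left(\begin{matrix}u_{1,h}\\u_{2,h}\end{matrix}\right),\qquad \left(\begin{matrix}v_{1,h'}\\v_{2,h'}\end{matrix}\right)=\rho_{\cE_2}(g)\left(\begin{matrix}v_{1,h}\\v_{2,h}\end{matrix}\right).$$
Applying the intertwining relation to both $h$ and $h'$ and eliminating the $\sigma_2$-coordinates yields $(M\rho_{\cE_1}(g)-\rho_{\cE_2}(g)M)\,(u_{1,h},u_{2,h})^t=0$ for all $h\in\ker\rho_\cA$ and all $g\in\pi_1(B)$. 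Since $K_1\isom\bZ^2$, the vectors $(u_{1,h},u_{2,h})^t$ span $\bQ^2$, so the matrix $M\rho_{\cE_1}(g)-\rho_{\cE_2}(g)M$ annihilates $\bQ^2$ and is therefore zero for every $g$, i.e.\ $\rho_{\cE_2}=M\rho_{\cE_1}M^{-1}$ with $M\in\textrm{GL}_2(\bQ)$. By the Isogeny Theorem~\ref{isogenyThm} such a rational conjugacy forces $\cE_1$ and $\cE_2$ to be isogenous, contradicting the standing hypothesis. Hence $K\isom\bZ^4$, establishing part (2); in both parts every nontrivial Galois group obtained is free abelian of positive rank, so the cover $B_\sigma\to B^*$ has infinite degree whenever the relative monodromy is nontrivial.
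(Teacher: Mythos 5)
Your proof is correct, and its skeleton matches the paper's: part (1) is handled exactly as in the paper (torsion of $\sigma_1$ kills the first two coordinates, projection onto the last two is an isomorphism, and Lemma \ref{NoCyclicGroup} gives $\{0\}$ or $\bZ^2$), and part (2) follows the same contradiction setup via Proposition \ref{possibleCases}, Proposition \ref{nonTrivial}, the construction of $M\in\textrm{GL}_2(\bQ)$, and the conjugation relation $\bigl(M\rho_{\cE_1}(g)-\rho_{\cE_2}(g)M\bigr)(u_{1,h},u_{2,h})^t=0$. Where you genuinely diverge is the endgame of part (2). The paper first invokes the Isogeny Theorem \ref{isogenyThm} to get Zariski density of $\rho_\cA(\pi_1(B))$ in $\textrm{SL}_2\times\textrm{SL}_2$, upgrades the relation to arbitrary pairs $(A,B)$, and then plugs in explicit unipotent matrices to force $M=0$, contradicting $\theta_{\sigma_2}(\ker\rho_\cA)\isom\bZ^2$. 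You instead observe that the vectors $(u_{1,h},u_{2,h})^t$ span $\bQ^2$ (since $\theta_{\sigma_1}(\ker\rho_\cA)\isom\bZ^2$), so the relation already yields the full intertwining $\rho_{\cE_2}(g)=M\rho_{\cE_1}(g)M^{-1}$ for all $g$; the image of $\rho_\cA$ then lies in the proper subgroup $\{(X,MXM^{-1})\}$, so by the dichotomy of Theorem \ref{isogenyThm} the schemes would be isogenous, contradicting the hypothesis. Both arguments rest on the Isogeny Theorem, but you use it in the contrapositive direction and avoid all matrix computations, which is cleaner; the paper's route, by producing $M=0$ explicitly, stays self-contained at that step and reuses the computational template of Theorem \ref{squareProduct}. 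Your rank-counting justification that the projections $p_1,p_2$ are isomorphisms (a surjection between free abelian groups of equal finite rank is injective) is also a tidier replacement for the paper's appeal to the groups $H_1,H_2$. One cosmetic caveat: your closing sentence about infinite degree should acknowledge that in part (1) the group may be $\{0\}$, in which case $B_\sigma=B^*$ and the cover is trivial rather than of infinite degree, exactly as the paper's own proof concedes by conditioning on $\theta_\sigma(\ker\rho_\cA)\neq 0$.
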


	\begin{proof}
		\begin{enumerate}
			\item Suppose that one between $\sigma_1, \sigma_2$ is a torsion section, say for example $\sigma_1$. Suppose that $\theta_\sigma(\ker{\rho_\cA})\neq 0$ and prove that $\theta_\sigma(\ker{\rho_\cA}) \isom \bZ^2$. Since $\sigma_1$ is torsion, we have $\theta_{\sigma_1}(\ker{\rho_{\cE_1}})=0$ (see Remark $\ref{torsionSection}$); in particular, this implies that $\theta_{\sigma_1}(\ker{\rho_\cA})=0$. The two conditions
			$$\theta_\sigma(\ker{\rho_\cA})\neq 0, \qquad \textrm{ and } \qquad \theta_{\sigma_1}(\ker{\rho_\cA})=0,$$
			imply that
			$$\theta_\sigma(\ker{\rho_\cA})\isom \theta_{\sigma_2}(\ker{\rho_\cA}), \qquad \textrm{ and } \qquad \theta_{\sigma_2}(\ker{\rho_\cA})\neq 0.$$
			By Lemma $\ref{NoCyclicGroup}$, we have that
			$$\theta_{\sigma_2}(\ker{\rho_\cA})\isom \bZ^2.$$
			Thus, it follows that $\theta_\sigma(\ker{\rho_\cA})\isom \bZ^2$ and the first part is proved.

			\item Now, let $\sigma_1, \sigma_2$ be both non-torsion sections and let us suppose by contradiction that $\theta_\sigma(\ker{\rho_\cA})$ is not isomorphic to $\bZ^4$. By Proposition $\ref{possibleCases}$, we can only have the case
			$$\theta_\sigma(\ker{\rho_\cA}) \isom \bZ^2.$$
			With the same calculations explicited in the case of product of isogenous curves, we obtain that there exists a matrix $M \in \textrm{GL}_2(\bQ)$ such that
			\begin{equation}\label{star}
				M\cdot \left(\begin{matrix}
					u_{1,g}\\
					u_{2,g}\end{matrix}\right) = \left(\begin{matrix}
					v_{1,g}\\
					v_{2,g}\end{matrix}\right)
			\end{equation}
			for all $g \in \ker{\rho_\cA}$.

		\textbf{Claim: } The matrix $M$ is the zero matrix, i.e. $M=0$.

			\textbf{Proof of Claim}

			Now, let us choose an element $h \in \ker{\rho_\cA}$ and an element $g \in \pi_1(B)$. Let us use the following notation
			$$\theta_{\sigma_1}(h)=\left(\begin{matrix} u_{1,h} \\ u_{2,h}\end{matrix}\right)
			\qquad
			\theta_{\sigma_2}(h)=\left(\begin{matrix} v_{1,h} \\ v_{2,h}\end{matrix}\right),$$
			and let us consider the following periods
			$$\omega_{h,\sigma_1}:= u_{1,h}\omega_{1,\cE_1} + u_{2,h}\omega_{2,\cE_1},
			\qquad
			\omega_{h,\sigma_2}:= v_{1,h}\omega_{1,\cE_2} + v_{2,h}\omega_{2,\cE_2}.$$
			Moreover, we will indicate with $\omega_{g,\sigma_1}, \omega_{g,\sigma_2}$ the variation of $\log_{\sigma_1}, \log_{\sigma_2}$ along $g$, respectively.
			
			Finally, let us consider the element $h':=ghg^{-1} \in \ker{\rho_\cA}$ and use an analogous notation as above, i.e.
			$$\omega_{h',\sigma_1}:= u_{1,h'}\omega_{1,\cE_1} + u_{2,h'}\omega_{2,\cE_1},
			\qquad
			\omega_{h',\sigma_2}:= v_{1,h'}\omega_{1,\cE_2} + v_{2,h'}\omega_{2,\cE_2}.$$	
			
			If we look at the action of $h'$ on the determination of $\log_{\sigma_1}$ we obtain
			\begin{align*}
				\log_{\sigma_1} \xrightarrow{g^{-1}} \log_{\sigma_1} - \rho_{\cE_1}(g^{-1})(\omega_{g,\sigma_1}) &\xrightarrow{h} \log_{\sigma_1} - \rho_{\cE_1}(g^{-1})(\omega_{g,\sigma_1}) + \omega_{h,\sigma_1}\\ &\xrightarrow{g} \log_{\sigma_1} + \rho_{\cE_1}(g)(\omega_{h,\sigma_1}).
			\end{align*}
			
			At the same way, if we look at the action of $h'$ on the determination of $\log_{\sigma_2}$ we obtain
			$$\log_{\sigma_2} \xrightarrow{h'} \log_{\sigma_2} + \rho_{\cE_2}(g)(\omega_{h,\sigma_2}).$$
			We can resume this by the following equations:
			$$\omega_{h',\sigma_1} = \rho_{\cE_1}(g)(\omega_{h,\sigma_1}), \qquad \omega_{h',\sigma_2} = \rho_{\cE_2}(g)(\omega_{h,\sigma_2}).$$ In terms of coordinates, this means
			\begin{equation}\label{first}
				\left(\begin{matrix}
					u_{1,h'}\\
					u_{2,h'}
				\end{matrix}\right) = \rho_{\cE_1}(g)\cdot \left(\begin{matrix}
					u_{1,h}\\
					u_{2,h}
				\end{matrix}\right),
				\qquad
				\left(\begin{matrix}
					v_{1,h'}\\
					v_{2,h'}
				\end{matrix}\right) = \rho_{\cE_2}(g)\cdot \left(\begin{matrix}
					v_{1,h}\\
					v_{2,h}
				\end{matrix}\right).
			\end{equation}
			
			Moreover, since $h, h' \in \ker{\rho_\cA}$, by $(\ref{star})$ we have that
			\begin{equation}\label{second}
				M\cdot \left(\begin{matrix}
					u_{1,h}\\
					u_{2,h}
				\end{matrix}\right) = \left(\begin{matrix}
					v_{1,h}\\
					v_{2,h}
				\end{matrix}\right),
				\qquad
				M\cdot \left(\begin{matrix}
					u_{1,h'}\\
					u_{2,h'}
				\end{matrix}\right) = \left(\begin{matrix}
					v_{1,h'}\\
					v_{2,h'}
				\end{matrix}\right).
			\end{equation}
			
			Now, we are ready to put all together: by $(\ref{first})$ and $(\ref{second})$ we obtain
			\begin{equation}\label{third}
				\left( M\rho_{\cE_1}(g) - \rho_{\cE_2}(g) M\right)\cdot \left(\begin{matrix}
					u_{1,h}\\
					u_{2,h}
				\end{matrix}\right)=0,
			\end{equation}
			for all $g \in \pi_1(B)$ and $h \in \ker{\rho_\cA}$.
			
			Since $\rho_\cA(\pi_1(B))$ is Zariski-dense in $\textrm{SL}_2(\bZ)\times \textrm{SL}_2(\bZ)$ (see Theorem $\ref{isogenyThm}$), this last relation has to be true for every pair of matrices $(A,B) \in \textrm{SL}_2(\bZ)\times \textrm{SL}_2(\bZ)$ in place of $(\rho_{\cE_1}(g),\rho_{\cE_2}(g))$ (observe that the relation does not depend on $u_{1,g}, u_{2,g}$).
			Let us choose
			$$A=\left( \begin{matrix} 1 & 2\\ 0 & 1 \end{matrix} \right), 
			\qquad 
			B=\left( \begin{matrix} 1 & 0\\ 0 & 1 \end{matrix} \right).$$
			Then for each $h \in \ker{\rho_\cA}$, the relation $(\ref{third})$ gives us the following equation
			\begin{align*}
				\left(\begin{matrix} 0 & 2\alpha\\ 0 & 2\gamma \end{matrix}\right)
				\cdot
				\left( \begin{matrix} u_{1,h}\\ u_{2,h} \end{matrix} \right)= 0.
			\end{align*}
			
			By Proposition $\ref{nonTrivial}$, since $\sigma_1$ and $\sigma_2$ are both non-torsion, we have $\theta_{\sigma_1}(\ker{\rho_\cA})\isom \bZ^2$. Therefore, there exists $h \in \ker{\rho_\cA}$ such that $u_{2,h}\neq 0$. It follows that $$\alpha=0, \qquad \gamma=0.$$
			
			Now, since $\theta_{\sigma_1}(\ker{\rho_\cA})\isom \bZ^2$ there also exists $h \in \ker{\rho_\cA}$ such that $u_{1,h}\neq 0$. By choosing 
			$$A=\left( \begin{matrix} 1 & 0\\ 2 & 1 \end{matrix} \right), 
			\qquad 
			B=\left( \begin{matrix} 1 & 0\\ 0 & 1 \end{matrix} \right).$$ 
			we also obtain $$\beta=0, \qquad \delta=0.$$ So the matrix $M$ is the zero matrix, i.e. $M=0$.			
			
			\textbf{End of the proof}
			
			This means that $v_{1,h}=v_{2,h}=0$ for all $h \in \ker{\rho_\cA}$, i.e.
			$$\theta_{\sigma_2}(\ker{\rho_\cA})=0.$$
			Since both sections are non-torsion, this is in contradiction with Proposition $\ref{nonTrivial}$. This concludes the proof.
		\end{enumerate}
	\end{proof}

	\begin{rem}
		Observe that if $E_1, E_2$ are complex elliptic curves which are not isogenous, then every proper connected algebraic subgroup of $E_1\times E_2$ is of one of the shapes $0\times E_2$ or $E_1 \times 0$. In other words, if $\cA=\cE_1\times_B\cE_2 \rightarrow B$ is a product of two non-isogenous elliptic schemes, then saying that the image of a section $\sigma=(\sigma_1, \sigma_2)$ is not contained in a proper group-subscheme is equivalent to saying that neither $\sigma_1$ nor $\sigma_2$ is a torsion section. Thus, Theorem $\ref{nonIsoProduct}$ proves Conjecture $\ref{conj}$ in the case of a fibered product of non-isogenous elliptic schemes.
	\end{rem}

	\begin{exe}
		Let us consider the line $B$ in $S^2$  (where $S=\bP_1-\{0,1,\infty\}$) defined by $x+y=2$, which is isomorphic under the first projection to $\bP_1-\{0,1,2,\infty\}$. Let us consider the scheme over $B$ whose fiber over the point $(\lambda,\mu) \in B$ is the product $\cL_\lambda \times \cL_\mu$ of the corresponding Legendre curves; denote it by $\cA \rightarrow B$. This is a product of two non-isogenous elliptic schemes, since the curve $\cL_\lambda$ is not isogenous generically to $\cL_{2-\lambda}$: in fact, if two elliptic schemes are isogenous then their $j$-invariants must have the same poles in $B$; but in this case, the schemes corresponding to $\cL_\lambda, \cL_\mu$ have a different set of bad reduction. We may consider the section $\sigma:B \rightarrow \cA$ given by
		$$\bP_1-\{0,1,2,\infty\}\ni\lambda \mapsto \left( (2,\sqrt{2-\lambda}), (2,\sqrt{2\lambda}) \right),$$
		whose components are non-torsion sections. Theorem $\ref{nonIsoProduct}$ yields that the relative monodromy group of the logarithm of $\sigma$ with respect to periods is isomorphic to $\bZ^4$.
		
		Can we say something about an explicit loop which leaves unchanged periods but not logarithm? We have such a loop $\Gamma_1$ (resp. $\Gamma_2$) for the logarithm of $\sigma_1$ (resp. $\sigma_2$). Unlike the Example $\ref{isoExa}$, this time the loops $\Gamma_1, \Gamma_2$ don't work for the logarithm of $\sigma$, since the periods of the two factors of $\cA \rightarrow B$ are not the same. Anyway, we can obtain such a loop as explained in the proof of Proposition $\ref{nonTrivial}$, i.e. taking the commutator of suitable loops which work for $\log_{\sigma_1}$ and $\log_{\sigma_2}$ (these last can be found looking at the construction in \cite{Tro}).
\end{exe}

\Addresses

 \end{document}